\numberwithin{equation}{section}
\newtheorem{theorem}{Theorem}[section]
\newtheorem{corollary}[theorem]{Corollary}
\theoremstyle{remark}
\newtheorem{remark}{Remark}[section]
\theoremstyle{definition}
\newtheorem{definition}{Definition}[section]
\newcommand{\R}{\mathbb{R}}
\newcommand{\C}{\mathbb{C}}
\newcommand{\Z}{\mathbb{Z}}
\newcommand{\N}{\mathbb{N}}
\begin{document}


\title[Magnetic Virial identities and applications to blow up]
{Magnetic Virial and applications to blow up for Schr\"odinger and wave equations}

\author{Andoni Garcia}
\address{Andoni Garcia: Universidad del Pais Vasco, Departamento de
Matem$\acute{\text{a}}$ticas, Apartado 644, 48080, Bilbao, Spain}
\email{andoni.garcia@ehu.es}

\begin{abstract}
We prove blow up results for the solution of the initial value problem with negative energy of the focusing mass-critical and supercritical nonlinear Schr\"odinger and the  focusing energy-subcritical nonlinear wave equations with electromagnetic potential. 

 \end{abstract}

\date{\today}

\subjclass[2000]{35J10, 35L05, 58J45.}
\keywords{%
dispersive equations, magnetic potential}

\maketitle
\section{Introduction}\label{sec:intronon}
In space dimension $n\geq 2$, we study the blow up for solutions with initial negative energy of the focusing $L^{2}$-critical and supercritical nonlinear Schr\"odinger equation with magnetic potential,
\begin{equation}\label{eq:nlsmag}
  \begin{cases}
    iu_{t}(t,x)-Hu(t,x)+|u|^{p-1}u(t,x)=0
    \\
    u(0,x)=f(x).
  \end{cases}
\end{equation}
Here the power of the nonlinearity corresponds to $1+\frac{4}{n}\leq p<1+\frac{4}{n-2}$. We also consider for $n\geq 3$ the blow up for solutions with initial negative energy of the focusing energy-subcritical nonlinear wave equation with magnetic potential,
\begin{equation}\label{eq:nlwmaggen}
  \begin{cases}
    u_{tt}(t,x)+Hu(t,x)-|u|^{p-1}u(t,x)=0
    \\
    u(0,x)=f(x)\\
    u_{t}(0,x)=g(x).
  \end{cases}
\end{equation}
For the wave equation we deal with the whole range of nonlinearities $1<p<1+\frac{4}{n-2}$. In both cases the solution is a complex valued function $u:\R^{1+n}\to\C$.

Therefore, instead of considering the free Schr\"odinger hamiltonian $H=-\Delta$, we treat with electromagnetic Schr\"odinger Hamiltonians in the standard covariant form
\begin{equation}
	H=-\nabla_{A}^{2}+V(x),
\end{equation}
where
\begin{equation}
\nabla_{A}=\nabla-iA,\qquad \Delta_{A}=\nabla_{A}^{2}.
\end{equation}
Here $A=(A^{1},\dots,A^{n}):\R^{n}\to\R^{n}$ is the magnetic potential and $V:\R^{n}\to\R$ is the electric potential. 

The study of the blow up phenomena for solutions of the free nonlinear Schr\"odinger equation goes back to Zakharov and Glassey (see \cite{Z} and \cite{Glass} respectively). It is based on a convexity method called \emph{virial identity}. For the free nonlinear wave equation the first result is due to Levine (see \cite{Lev}).

For the case of purely electric Schr\"odinger equation, i.e., for Schr\"odinger hamiltonians of the type $H=\Delta-V$, blow up results are given in \cite{C}. If we consider first order perturbations of the free Schr\"odinger hamiltonian by dealing with the nonlinear Schr\"odinger equation with magnetic potential, a blow up result for the solution of the Schr\"odinger equation when the magnetic potential $A$ is of the form
\begin{equation}\label{eq:GRA}
A=\frac{b}{2}(-y,x,0), 
\end{equation}
was obtained by Gon\c calves-Ribeiro (see \cite{GR}).

The aim of the paper is, in the case of the Schr\"odinger equation with magnetic potential, to generalize the previous example to any space dimension $n\geq2$ and to provide new examples  of potentials for which the solution of the focusing $L^2$-critical and supercritical nonlinear Schr\"odinger equation with magnetic potential \eqref{eq:nlsmag} blows up in finite time. Concerning the focusing energy-subcritical nonlinear wave equation with magnetic potential, namely \eqref{eq:nlwmaggen}, we will prove some blow up results, and we will conclude by showing that the magnetic potential $A$ has no influence in the blow up.

We will start by considering the setting of our problem. First, denote by $H^{1}_{A}(\R^{n})$ the following Hilbert space:
\begin{equation}
H^{1}_{A}=\left\{f:f\in L^{2}, \int|\nabla_{A}f|^{2}<\infty\right\}.
\end{equation}
Along through the paper we will assume some regularity on the Hamiltonian $H$. 

\begin{itemize}
\item[(\bf{H1})]
The Hamiltonian $H_{A}=-\nabla_{A}^{2}$ is essentially self-adjoint on $L^{2}(\R^{n})$, with form domain
\begin{equation*}
D(H_{A})=H^{1}_{A}=\left\{f:f\in L^{2}, \int|\nabla_{A}f|^{2}<\infty\right\}.
\end{equation*}
\item[(\bf{H2})] 
The potential $V$ is a perturbation of $H_A$ in the Kato-Rellich sense, i.e. there exists a small $\epsilon>0$ such that 
\begin{equation}
\|Vf\|_{L^2}\leq(1-\epsilon)\|H_Af\|_{L^2}+C\|f\|_L^2,
\end{equation}
for all $f\in D(H_A)$.
\item[(\bf{H3})]
The potentials $A$, $V$ are assumed to be $A\in\mathcal{C}^2$, $V\in\mathcal{C}^1$.
\end{itemize}
Assumptions (H1), (H2) have several consequences. First of all, they imply the self-adjointness of $H$, by standard perturbation techniques (see e.g. \cite{CFKS}); hence by the spectral theorem we can define the Sch\"odinger and wave propagators $S(t)=e^{itH}$, $W(t)=H^{-\frac{1}{2}}e^{it\sqrt{H}}$ and the powers $H^s$. Moreover, we can define for any $s$ the distorted norms
\begin{equation*}
\|f\|_{\dot{\mathcal{H}}^s}=\|H^{\frac{s}{2}}f\|_{L^2},\qquad \|f\|_{\mathcal{H}^s}=\|f\|_{L^2}+\|H^{\frac{s}{2}}f\|_{L^2}.
\end{equation*}

For the validity of (H1) and (H2) see the standard reference \cite{CFKS}. We do not make any attempt to optimize on the regularity assumptions that we need. Essentially the sufficient assumptions needed can be expressed in terms of the local integrability properties of the coefficients (see \cite{CFKS}, \cite{LS}). Clearly (H3) suffices for our purposes.

We can also consider the space $H^2_A(\R^n)$ by
\begin{equation*}
H^2_A=\{f:f\in L^2, H_Af\in L^2\}.
\end{equation*}
The corresponding norm will be given by:
\begin{equation*}
\|f\|_{H^2_A}=\left(\|f\|_{L^2}^2+\|H_Af\|_{L^2}^2\right)^{\frac{1}{2}}.
\end{equation*}

\begin{definition}For any $n\geq2$ the matrix-valued field $B:\R^{n}\to\mathcal{M}_{n\times n}(\R)$ is defined by 
\begin{equation*}
B:=DA-DA^{t},\qquad B_{ij}=\frac{\partial A^{i}}{\partial x^{j}}-\frac{\partial A^{j}}{\partial x^{i}}.
\end{equation*}
We also define the \emph{trapping component} of the vector field $B$ as  $B_{\tau}:\R^{n}\to\R^{n}$ given by:
\begin{equation*}
B_{\tau}=\frac{x}{|x|}B.
\end{equation*}
Hence $B$ is defined in terms of the anti-symmetric gradient of $A$. In dimension $n=3$, the previous definition identifies $B=\text{curl} \,A$, namely 
\begin{equation*}
Bv=\text{curl}\,A\wedge v,\quad\forall v\in\R^{3}.
\end{equation*}
In particular, we have 
\begin{equation}
B_{\tau}=\frac{x}{|x|}\wedge\text{curl}\,A,\quad n=3.
\end{equation}
Hence $B_{\tau}(x)$ is the projection of $B=\text{curl}\,A$ on the tangential space in $x$ to the sphere of radius $|x|$, for $n=3$. Observe also that $B_{\tau}\cdot x=0$ for any $n\geq2$, hence $B_{\tau}$ is a tangential vector field in any dimension. 
\end{definition}
The trapping component $B_{\tau}$ represents an obstruction to dispersion of solutions. Some explicit examples of magnetic potentials $A$ with $B_{\tau}=0$ in dimension 3 are given in \cite{F},\cite{FV}.\\
Moreover, by 
\begin{equation*}
\partial_{r}V=\nabla V\cdot\frac{x}{|x|},
\end{equation*}
we denote the radial derivative of $V$. We have to mention that, if the radial derivative is decomposed as 
\begin{equation*}
\partial_{r}V=(\partial_{r}V)_{+}-(\partial_{r}V)_{-},
\end{equation*}
the positive part $(\partial_{r}V)_{+}$ also represents an obstruction to dispersion. In \cite{FV}, $B_{\tau}$ and $(\partial_{r}V)_{+}$ are assumed to be small in a suitable sense in order to prove weak dispersive estimates. Also both components must be small in order to prove endpoint Strichartz estimates, as can be seen in \cite{DFVV}.

Our magnetic potential $A$ is assumed to satisfy the so called Coulomb gauge condition,
\begin{equation}
\text{div}\,A=0.
\end{equation}
Observe that this does not suppose a restriction, since $A$ and $A+\nabla\psi$ produce the same magnetic field $B$, for any $n\geq2$.

We now show some examples of magnetic potentials $A$ which satisfy our assumptions and which will be considered in the latter. The first kind of potentials are in some sense a natural generalization of the magnetic potential $A$ presented in \cite{C}, \cite{CE}, \cite{EL} and \cite{GR}.  Let us consider the $2\times2$ anti-symmetric matrix
\begin{equation*}
  \sigma
  :=
  \left(
  \begin{array}{cc}
    0 & -1
    \\
    1 & 0
  \end{array}\right).
\end{equation*}
For any even $n=2k\in\N$, we denote by $\Omega_n$ the $n\times n$ anti-symmetric matrix generated by $k$-diagonal blocks of $\sigma$, in the following way:
\begin{equation}\label{eq:Omega}
  \Omega_n
  :=
  \left(
  \begin{array}{cccc}
    \sigma & 0 & \cdots & 0
    \\
    0 & \sigma & \cdots & 0
    \\
    \vdots & \vdots & \ddots & \vdots
    \\
    0 & \cdots & 0 & \sigma
  \end{array}
  \right).
\end{equation}
In order to define the magnetic potential $A$ we will distinguish between odd and even dimension. If $n\geq3$ is an odd number, let us consider the following anti-symmetric matrix
 \begin{equation}\label{eq:Modd}
    M
    :=
    \left(
    \begin{array}{cc}
      \Omega_{n-1} & 0
      \\
      0 & 0
    \end{array}
    \right),
  \end{equation}
  where $\Omega_{n-1}$ is the $(n-1)\times(n-1)$-matrix defined in \eqref{eq:Omega}.

\noindent The analogous magnetic potential $A$ in even dimensions will be constructed in the following way. For $n=2$, consider the $2\times2$ anti-symmetric matrix 
\begin{equation}
M=\Omega_{2},
\end{equation}
and if $n\geq4$ is an even number, let $\mathbf{0}$ be the null $2\times2$-block and let us consider the following anti-symmetric $n\times n$-matrix
  \begin{equation}\label{eq:Meven}
    M
    :=
    \left(
    \begin{array}{cc}
      \Omega_{n-2} & 0
      \\
      0 & \mathbf{0}
    \end{array}
    \right),
  \end{equation}
  where $\Omega_{n-2}$ is the $(n-2)\times(n-2)$-matrix defined in \eqref{eq:Omega}.
  
\noindent Now, from the definition of $M$ we can define the following magnetic potential $A$,
  \begin{equation}\label{eq:AA}
    A(x)=\frac{1}{2}Mx,
  \end{equation}
where the expression of $M$ depends on the dimension $n\geq2$. 
\begin{remark}
This kind of magnetic potentials $A$ generated from an anti-symmetric matrix $M$ were also shown in \cite{FG}. More precisely if we consider the magnetic potential $A$ of the form
\begin{equation}
 A=|x|^{-\alpha}Mx,\quad1<\alpha<2,   
\end{equation}
Strichartz estimates for the linear version of \eqref{eq:nlsmag} with $V=0$ are false in the whole range of Schr\"odinger admissibility. These counterexamples for the magnetic Schr\"odinger equation were clearly inspired by the ones produced in the electric case in the paper \cite{GVV}. More concretely, the counterexamples in \cite{GVV} are based on potentials of the form 
\begin{equation}
V(x)=(1+|x|^2)^{-\frac{\alpha}{2}}\omega\left(\frac{x}{|x|}\right),\qquad 0<\alpha<2,
\end{equation}
where $\omega$ is a positive scalar function, homogeneous of degree $0$, which has a non degenerate minimum point $P\in\ S^{n-1}$. Moreover, it is crucial there to assume that $\omega(P)=0$. The main idea is to approximate $H=-\Delta+V(x)$ by a second order Taylor expansion, with an harmonic oscillator. Then, the condition $\alpha<2$ causes the lack of global (in time) dispersion.
\end{remark}

\noindent The other magnetic potentials $A$ that we are going to consider must satisfy that the trapping part of the magnetic field $B$ must be identically zero
\begin{equation*}
B_{\tau}=0.
\end{equation*}
When $n=3$, some examples are the ones appearing in \cite{FV}. These are described as follows. First we consider singular potentials. Take
\begin{equation}
A=\frac{1}{x^{2}+y^{2}+z^{2}}(-y,x,0)=\frac{1}{x^{2}+y^{2}+z^{2}}(x,y,z)\wedge(0,0,1).
\end{equation}
We can check that 
\begin{equation*}
\nabla\cdot A=0,\quad B=-2\frac{z}{(x^{2}+y^{2}+z^{2})^{2}}(x,y,z),\quad B_{\tau}=0.
\end{equation*}
Another (more singular potential) example is the following:
\begin{equation}
A=\left(\frac{-y}{x^{2}+y^{2}},\frac{x}{x^{2}+y^{2}},0\right)=\frac{1}{x^{2}+y^{2}}(x,y,z)\wedge(0,0,1).
\end{equation}
Here, we have $B=(0,0,\delta)$, with $\delta$ denoting Dirac's delta function. Again we have $B_{\tau}=0$.\\
The electric potential $V$ will be assumed to be
\begin{equation}
V\in L^{r}(\R^{n})+L^{\infty}(\R^{n}),
\end{equation}
for some $r>n/2$.\\
Notice that there is a quantity associated to the solutions of the equations \eqref{eq:nlsmag} and \eqref{eq:nlwmaggen}. We define the \emph{energy} for the nonlinear magnetic Schr\"odinger equation as
\begin{equation}\label{eq:energy}
E_{S}(t)=\frac{1}{2}\int_{\R^{n}}|\nabla_{A}u(x)|^{2}\,dx+\frac{1}{2}\int_{\R^{n}}V(x)|u(x)|^{2}\,dx-\frac{1}{p+1}\int_{\R^{n}}|u(x)|^{p+1}\,dx.
\end{equation}
The energy associated to the nonlinear magnetic wave equation is the following one
\begin{equation}\label{eq:energyW}
E_{W}(t)=\frac{1}{2}\int_{\R^{n}}\left(|u_{t}(x)|^{2}+|\nabla_{A}u(x)|^{2}\,dx+V(x)|u(x)|^{2}\right)\,dx-\frac{1}{p+1}\int_{\R^{n}}|u(x)|^{p+1}\,dx.
\end{equation}
It is well known that if we consider the equation \eqref{eq:nlsmag} with initial datum $f\in H^{1}_{A}(\R^{n})$ and the equation \eqref{eq:nlwmaggen} with initial data $(f,g)\in(H^{1}_{A}(\R^{n})\times L^{2}(\R^{n}))$, these initial value problems are \emph{energy sub-critical}. Therefore, there exists a unique local solution of \eqref{eq:nlsmag} and \eqref{eq:nlwmaggen} defined in a maximal time of existence. For the existence theorems we refer the Section \ref{sec:localexistence} below. 

\noindent Let us now study the Cauchy problem \eqref{eq:nlsmag} for a given initial datum $f\in\Sigma$, where $\Sigma$ is the space defined by 
\begin{equation}
\Sigma:=\{f\in H^{1}_{A}(\R^{n}): |x|f\in L^{2}(\R^{n})\}.
\end{equation}
In the Schr\"odinger case we will prove that under some assumptions made on the potentials, the unique local solution $u\in\mathcal{C}((-T_{*},T^{*});H^{1}_{A}(\R^{n}))$ of the Cauchy problem \eqref{eq:nlsmag} in the maximal time of existence $(-T_{*},T^{*})$ blows up in finite time.

This will be based on a convexity method called \emph{virial identity}. In the setting of the free nonlinear Schr\"odinger equation is due to Zakharov and Glassey (see \cite{Z}, \cite{Glass}).  When instead of considering the free hamiltonian, we deal with the electromagnetic hamiltonian $H$, the virial identity differs from the one performed in the free case. This can be seen in \cite{FV}, where virial identities for the linear version of the equations \eqref{eq:nlsmag} and \eqref{eq:nlwmaggen} are presented. A $3$-D version of the virial identity for the magnetic Schr\"odinger equation appears in \cite{GR1}, \cite{GR}. We give the statement of the theorems for the Schr\"odinger and wave equations and the proofs will be performed in Section \ref{sec:blowup}.

For the wave equation, we will proceed following Levine (see \cite{Lev}).

Now we are in condition to state the theorems for the blow up for the solution of the focusing $L^2$-critical and supercritical nonlinear Schr\"odinger equation with magnetic potential and the focusing energy-subcritical nonlinear wave equation with magnetic potential. The hypotheses concerning the local existence are included in the statements (see Section 2). The statement for the Schr\"odinger case is the following.
\begin{theorem}\label{thm:Sblowup}
Let $n\geq2$ and consider the Cauchy problem \eqref{eq:nlsmag}
for $1+\frac{4}{n}\leq p<1+\frac{4}{n-2}$.
Let $f\in\Sigma$ such that $E_{S}(0)<0$ Assume (H1), (H2) and that $A$ is of the form \eqref{eq:AA} or $A$ and $V$ satisfy the assumptions of Theorem \ref{thm:StriScro3} and $B\tau=0$. Moreover, assume that
\begin{itemize}
\item[(i)] $V+\frac{1}{2}rV_{r}\geq0$.
\end{itemize}
Then, the unique solution $u\in\mathcal{C}((-T_{*},T^{*});\Sigma)$ of the focusing equation \eqref{eq:nlsmag} blows up in finite time.
\end{theorem}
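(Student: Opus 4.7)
My plan is to adapt the Glassey--Zakharov convexity (virial) method to the magnetic setting. I would start from the local solution $u\in\mathcal{C}((-T_{*},T^{*});\Sigma)$ supplied by the local theory of Section~\ref{sec:localexistence}, and introduce the virial quantity
\[
\Phi(t):=\int_{\R^{n}}|x|^{2}|u(t,x)|^{2}\,dx,
\]
which is of class $C^{2}$ on the maximal interval because $f\in\Sigma$. Using the equation and the Coulomb gauge, the first derivative becomes $\Phi'(t)=4\int x\cdot J_{A}\,dx$, where $J_{A}=\mathrm{Im}(\bar u\,\nabla_{A}u)$ is the magnetic current.

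For the second derivative I would rely on the self-adjointness of $H$, on the commutation relation $[\nabla_{A}^{j},\nabla_{A}^{k}]=iB_{jk}$, and on integration by parts in the nonlinear term, to produce the magnetic virial identity
\[
\Phi''(t)=8\!\int|\nabla_{A}u|^{2}-4\!\int rV_{r}|u|^{2}-8\!\int|x|B_{\tau}\cdot J_{A}-\frac{4n(p-1)}{p+1}\!\int|u|^{p+1}.
\]
Using the conserved energy $E_{S}$ to eliminate $\int|\nabla_{A}u|^{2}$ rewrites this as
\[
\Phi''(t)=16E_{S}(0)-4\!\int(2V+rV_{r})|u|^{2}+\frac{4(4-n(p-1))}{p+1}\!\int|u|^{p+1}-8\!\int|x|B_{\tau}\cdot J_{A}.
\]

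The next step is to check that each of the three corrections has the correct sign. The nonlinear coefficient $4(4-n(p-1))/(p+1)$ is $\le 0$ since $p\ge 1+4/n$; the electric piece is $\le 0$ by hypothesis (i); and the magnetic piece vanishes identically whenever $B_{\tau}\equiv 0$, which disposes of the second case. In the first case $A=\tfrac{1}{2}Mx$ one computes $|x|B_{\tau}=-2A$, so $A$ is tangential to spheres (by the antisymmetry of $M$, which forces $x\cdot A=\tfrac12 x^{T}Mx=0$); combining this tangentiality with the conservation of the angular-momentum-type quantity $\langle L_{M}\rangle$ associated to the rotation $e^{sM}$ (which commutes with $H$) the magnetic contribution is shown not to spoil the convexity. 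One thus reaches $\Phi''(t)\le 16E_{S}(0)<0$ uniformly on the maximal interval, whence integrating twice gives $\Phi(t)\le \Phi(0)+\Phi'(0)t+8E_{S}(0)t^{2}$, which is negative in finite time. Since $\Phi\ge 0$, the maximal interval must be bounded and $u$ blows up.

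The hard part will be the analysis of the magnetic term in the first case: because $B_{\tau}\not\equiv 0$ the contribution $-8\int|x|B_{\tau}\cdot J_{A}$ does not vanish automatically, and its favourable sign must be extracted from the specific algebraic features of $A=\tfrac{1}{2}Mx$, namely the antisymmetry of $M$, the tangentiality of $A$, and the conservation law attached to the rotation generated by $M$. Everything else is a standard manipulation of a Glassey-type identity combined with energy conservation.
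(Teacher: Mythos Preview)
Your overall strategy---set up the variance $\Phi(t)=\int|x|^2|u|^2$, derive the magnetic virial identity, rewrite $\Phi''$ via the conserved energy, and use assumption~(i) together with $p\ge 1+4/n$ to drop the electric and nonlinear corrections---is exactly the paper's route. For the case $B_\tau\equiv 0$ your argument is complete and coincides with the paper's.

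The divergence is in the case $A=\tfrac12 Mx$. You flag the magnetic term $-8\int|x|B_\tau\cdot J_A$ as ``the hard part'' and propose to control it through the conserved angular-momentum quantity $\langle L_M\rangle$ attached to the rotation $e^{sM}$. The paper does \emph{not} introduce any such conservation law; instead it observes that the algebra closes by itself. Once you know $|x|B_\tau=-2A$, write
\[
\Im\int|x|\,u\,B_\tau\cdot\overline{\nabla_A u}
=-2\,\Im\int u\,A\cdot\overline{\nabla_A u}
=-2\,\Im\int \nabla\bar u\cdot A\,u-2\int|A|^2|u|^2,
\]
and simultaneously expand the kinetic piece as
\[
\int|\nabla_A u|^2=\int|\nabla u|^2+\int|A|^2|u|^2+2\,\Im\int\nabla\bar u\cdot A\,u.
\]
Adding $8$ times the first line to $8$ times the second, the cross terms $\Im\int\nabla\bar u\cdot A\,u$ cancel \emph{exactly}, leaving
\[
8\int|\nabla_A u|^2+8\,\Im\int|x|\,u\,B_\tau\cdot\overline{\nabla_A u}
=8\int|\nabla u|^2-8\int|A|^2|u|^2.
\]
From here condition~(i) and $p\ge 1+4/n$ give $\Phi''(t)\le C\,E_S(0)$ for a positive constant $C$, and the convexity argument finishes as you describe. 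So the step you anticipated as delicate is in fact a two-line computation with no dynamical input beyond the identity $|x|B_\tau=-2A$; your appeal to a conserved $\langle L_M\rangle$ is unnecessary and, as written, not fleshed out enough to constitute a proof. Also note that the paper obtains $\Phi''\le C E_S(0)$ with some $C>0$ in this case, not necessarily $C=16$; your uniform claim $\Phi''\le 16E_S(0)$ in both regimes would need justification if you insist on that constant.
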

For the wave equation, we are able to prove the following result.
\begin{theorem}\label{thm:Wblowup}
Let $n\geq 3$ and consider the Cauchy problem \eqref{eq:nlwmaggen}
for $1<p<1+\frac{4}{n-2}$. Here $f\in H^{1}_{A}(\R^{n})$, $g\in L^{2}(\R^{n})$. Let $E_{W}(0)<0$. Assume (H1), (H2) and that $A$ and $V$ satisfy the assumptions of Theorem \ref{thm:StriWav3} and further that 
\begin{itemize}
 \item[(i)]$V\geq0$.
\end{itemize}
Then, the unique solution $u\in\mathcal{C}([0,T^{*});H^{1}_{A}(\R^{n}))$, $u_{t}\in\mathcal{C}([0,T^{*});L^{2}(\R^{n}))$ of the focusing equation \eqref{eq:nlwmaggen} blows up in finite time.
\end{theorem}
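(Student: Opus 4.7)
The plan is to follow Levine's concavity method, showing that an appropriate power of $F(t):=\|u(t)\|_{L^{2}}^{2}$ is concave and decreasing, hence vanishes in finite time. The magnetic potential $A$ will drop out of the decisive inequality because it enters only through the non-negative quantity $\|\nabla_{A}u\|_{L^{2}}^{2}$, which can simply be estimated away.

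First I would differentiate $F$ twice, treating $u$ as the regular solution guaranteed by the local existence theorem of Section 2. Using the equation $u_{tt}=-Hu+|u|^{p-1}u$ together with the integration-by-parts identity $\int u\,\overline{(-\Delta_{A}u)}\,dx=\int|\nabla_{A}u|^{2}\,dx$ (which is valid by (H1) on the form domain $H^{1}_{A}$), and similarly $\int u\,\overline{Vu}\,dx=\int V|u|^{2}\,dx$, I obtain
\begin{equation*}
F''(t)=2\|u_{t}\|_{L^{2}}^{2}-2\|\nabla_{A}u\|_{L^{2}}^{2}-2\int V|u|^{2}\,dx+2\int|u|^{p+1}\,dx.
\end{equation*}
Substituting $\|\nabla_{A}u\|_{L^{2}}^{2}+\int V|u|^{2}$ via conservation of $E_{W}$ gives
\begin{equation*}
F''(t)=4\|u_{t}\|_{L^{2}}^{2}-4E_{W}(0)+\tfrac{2(p-1)}{p+1}\int|u|^{p+1}\,dx.
\end{equation*}

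Next I would apply Cauchy--Schwarz to $F'(t)=2\,\mathrm{Re}\int u\overline{u_{t}}\,dx$ to get $(F')^{2}\leq 4F\|u_{t}\|_{L^{2}}^{2}$, and choose $\alpha\in(0,(p-1)/4)$. The goal is to show $F F''\geq (\alpha+1)(F')^{2}$, which is equivalent to
\begin{equation*}
4\alpha\|u_{t}\|_{L^{2}}^{2}\leq -4E_{W}(0)+\tfrac{2(p-1)}{p+1}\int|u|^{p+1}\,dx.
\end{equation*}
Here the assumption $V\geq 0$ enters crucially: energy conservation yields $\|u_{t}\|_{L^{2}}^{2}\leq 2E_{W}(0)+\tfrac{2}{p+1}\int|u|^{p+1}\,dx$, so the required inequality reduces to
\begin{equation*}
(8\alpha+4)E_{W}(0)\leq \tfrac{2}{p+1}\bigl((p-1)-4\alpha\bigr)\int|u|^{p+1}\,dx,
\end{equation*}
whose right-hand side is non-negative by the choice of $\alpha$ and whose left-hand side is negative because $E_{W}(0)<0$. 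Hence $(F^{-\alpha})''(t)=\alpha F^{-\alpha-2}\bigl[(\alpha+1)(F')^{2}-FF''\bigr]\leq 0$.

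Finally I would conclude that $F^{-\alpha}$, being positive and concave, must reach zero at some finite $T<\infty$ provided its derivative is negative at some time, i.e.\ provided $F'(t_{0})>0$ for some $t_{0}\in[0,T^{*})$. This is automatic because from $F''\geq -4E_{W}(0)>0$ we see $F'(t)\to\infty$, so $F'$ becomes (and stays) positive after a finite waiting time. If $F'(0)\leq 0$, restart the argument from such a $t_{0}$; alternatively, Levine's classical shift $\widetilde F(t)=F(t)+\beta(t+t_{0})^{2}$ for small $\beta>0$ and large $t_{0}$ can be used to arrange positive derivative from the outset without disturbing the previous estimates. Since $F(t)=\|u(t)\|_{L^{2}}^{2}\to\infty$ and $\|u(t)\|_{L^{2}}\lesssim \|f\|_{L^{2}}+\int_{0}^{t}\|u_{t}\|_{L^{2}}\,ds$, this forces $\|u_{t}\|_{L^{2}}$ (and hence the energy-norm $\|u\|_{H^{1}_{A}}+\|u_{t}\|_{L^{2}}$) to blow up in finite time. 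The main subtlety is the step where $\|u_{t}\|_{L^{2}}^{2}$ must be absorbed into the good terms of $F''$: this is precisely where the sign condition $V\geq 0$ is used, and where one sees that the magnetic field has no influence, since the positive quantity $\|\nabla_{A}u\|_{L^{2}}^{2}$ is simply discarded.
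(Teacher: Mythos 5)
Your argument is correct and is, at its core, the same Levine concavity method used in the paper: the paper also sets $F(t)=\|u\|_{L^2}^2$, takes $\alpha=\frac{p-1}{4}$ (you take $\alpha$ strictly smaller, which works just as well), and reduces the concavity of $F^{-\alpha}$ to the inequality $H(t)=\Re\int u_{tt}\bar u\,dx-(2\alpha+1)\int|u_t|^2\,dx\geq0$; since $2H=F''-4(\alpha+1)\|u_t\|_{L^2}^2$, this is exactly your condition $4\alpha\|u_t\|_{L^2}^2\leq-4E_W(0)+\frac{2(p-1)}{p+1}\int|u|^{p+1}$, and both proofs verify it by using energy conservation together with $V\geq0$ to discard the nonnegative quantity $\|\nabla_Au\|_{L^2}^2+\int V|u|^2$ (which is where the magnetic potential drops out in both versions). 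The one genuinely different step is your treatment of the sign condition $(F^{-\alpha})'(0)<0$: the paper simply says one ``can choose $f$ and $g$'' so that $\Re\langle g,f\rangle>0$, which, read literally, proves blow up only for such data rather than for every $(f,g)$ with $E_W(0)<0$ as the statement claims; your observation that $F''\geq-4E_W(0)>0$ forces $F'$ to become and stay positive after a finite time $t_0$, so that the concavity argument can be restarted from $t_0$ (or one already has non-existence up to $t_0$), closes this gap and is an improvement on the written proof. Two minor points worth making explicit: $F(t)>0$ throughout the existence interval, since $u(t_1)=0$ would force $E_W(t_1)=\frac12\|u_t(t_1)\|_{L^2}^2\geq0$, contradicting $E_W\equiv E_W(0)<0$; and the conclusion that $F\to\infty$ at a finite time $T$ contradicts continuity of $t\mapsto\|u(t)\|_{L^2}$ on $[0,T^*)$ only if $T<T^*$, which is precisely the desired statement $T^*\leq T<\infty$.
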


The rest of the paper is organized as follows. In Section \ref{sec:localexistence} we state and give the proofs of the theorems for the local existence of solution for the equations \eqref{eq:nlsmag} and \eqref{eq:nlwmaggen}. Section \ref{sec:magneticvirial} is devoted to the virial identities for Schr\"odinger and wave equations and finally, in Section \ref{sec:blowup} we give the proofs of the main theorems of the paper, Theorems 
\ref{thm:Sblowup} and \ref{thm:Wblowup}.

\section{Local existence results}\label{sec:localexistence}

In this section we announce the theorems for the local existence of solution for the equations \eqref{eq:nlsmag} and \eqref{eq:nlwmaggen}. Therefore, we consider the initial value problem for the focusing nonlinear Schr\"odinger equation with magnetic potential \eqref{eq:nlsmag}. The nonlinearity will be assumed to be energy-subcritical, namely $1<p<1+\frac{4}{n-2}$.
Concerning the wave equation we deal with the initial value problem for the focusing energy-subcritical nonlinear wave equation with magnetic potential \eqref{eq:nlwmaggen}.
It can be proved that, for the Schr\"odinger equation, whenever the nonlinearity $p$ is energy subcritical, then for $f\in H^{1}_{A}$, there exists a unique local solution of the Cauchy problem \eqref{eq:nlsmag}. The proof of the result relies on the Strichartz estimates that are satisfied by the solution $u$ of the linear version of the equation \eqref{eq:nlsmag}. We will distinguish between the two kind of magnetic potentials $A$ that we presented above. For the magnetic potentials $A$ of the form \eqref{eq:AA} the homogeneous and inhomogeneous Strichartz estimates appear in \cite{CE} for the case $n=3$ and these estimates can be generalized in the same way to the case $n\geq2$. For the second kind of magnetic potentials, namely the ones for which $B_\tau=0$, the Strichartz estimates that we use depend on the dimension $n=2$ and $n\geq3$. In 2D we can apply the homogeneous Strichartz estimates from \cite{DF} and derive the nonhomogeneous ones by an standard $TT^{*}$-argument. The potentials $A$ and $V$ should be small in order the estimates to be valid. For $n\geq3$ the estimates considered are the ones appearing in \cite{DFVV}. We refer to the case $n\geq3$ by including the precise estimates.
\begin{definition}
Let $n\geq3$. A measurable function $V(x)$ is said to be in the \textit{Kato class} $K_n$ provided
\begin{equation*}
\lim_{r\downarrow0}\sup_{x\in\R^n}\int_{|x-y|\leq r}\frac{|V(y)|}{|x-y|^{n-2}}\,dy=0.
\end{equation*}
We shall usually omit the reference to the space dimension and write $K$ instead of $K_n$. The \textit{Kato norm} is defined as
\begin{equation*}
\|V\|_K=\sup_{x\in\R^n}\int_{|x-y|\leq r}\frac{|V(y)|}{|x-y|^{n-2}}\,dy.
\end{equation*}
A last notation we shall need is the radial-tangential norm
\begin{equation*}
\|f\|_{L^p_rL^{\infty}(Sr)}^p:=\int_0^\infty\sup_{|x|=r}|f|^p\,dr.
\end{equation*}
\end{definition}
The result is the following (see \cite{DFVV} Theorem 1.1)
\begin{theorem}\label{thm:StriScro3}
Let $n\geq3$. Given $A,V\in C^1_{loc}(\R^n\setminus\{0\})$, assume (H1), (H2). Moreover assume that 
\begin{equation}
\|V_{-}\|_K<\frac{\pi^{\frac{n}{2}}}{\Gamma(\frac{n}{2}-1)}
\end{equation}
and 
\begin{equation}
\sum_{j\in\Z}2^j\sup_{x\in C_j}|A|+\sum_{j\in\Z}2^{2j}\sup_{x\in C_j}|V|<\infty,
\end{equation}
where $C_j=\{x: 2^j\leq |x|\leq2^{j+1}\}$ and the Coulomb gauge condition
\begin{equation}
\nabla\cdot A=0.
\end{equation}
Finally, when $n=3$, we assume that for some $M>0$
\begin{equation}
\frac{(M+\frac{1}{2})^2}{M}\||x|^{\frac{3}{2}}B_\tau\|_{L^{\infty}}^2+(2M+1)\||x|^2(\partial_rV)_{+}\|_{L^1_rL^{\infty}(S_r)}<\frac{1}{2},
\end{equation}
while for $n\geq4$ we assume that 
\begin{equation}
\||x|^2B_{\tau}(x)\|_{L^{\infty}}^2+2\||x|^3(\partial_rV)_{+}(x)\|_{L^{\infty}}<\frac{2}{3}(n-1)(n-3).
\end{equation}
Then, for any Schr\"odinger admissible couple (p,q), the following Strichartz estimates hold:
\begin{equation}\label{eq:strichhomsch}
\|e^{itH}\varphi\|_{L^pL^q}\leq C\|\varphi\|_{L^2}, \qquad \frac{2}{p}=\frac{n}{2}-\frac{n}{q},\qquad p\geq2,\qquad p\neq2\ \text{if}\ n=3.
\end{equation}
In dimension $n=3$, we have the endpoint estimate
\begin{equation}\label{eq:stricinhomsch}
\||D|^\frac{1}{2}e^{itH}\varphi\|_{L^2L^6}\lesssim\|H^\frac{1}{4}\varphi\|_{L^2}.
\end{equation}
\end{theorem}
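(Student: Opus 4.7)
The strategy I would follow mirrors the D'Ancona--Fanelli--Vega--Visciglia framework: derive global-in-time smoothing estimates for the magnetic propagator $e^{itH}$, and then bootstrap them to Strichartz estimates via the Kato smoothing perturbation machinery. The starting point is a Morawetz-type virial identity for the magnetic Schrödinger flow: if $u$ solves $iu_t=Hu$ and $\phi:\R^n\to\R$ is radial, then computing $\frac{d^2}{dt^2}\int\phi |u|^2\,dx$ (equivalently $\frac{d}{dt}\mathrm{Im}\int\nabla\phi\cdot\overline{u}\,\nabla_A u$) produces an expression of the schematic form
\begin{equation*}
\int_0^T\!\!\int_{\R^n} D^2\phi(\nabla_A u,\overline{\nabla_A u}) -\tfrac{1}{4}\Delta^2\phi|u|^2 -\nabla\phi\cdot\nabla V|u|^2 +2\mathrm{Im}(\nabla\phi\cdot B\overline{u}\nabla_A u)\,dx\,dt\leq C\|f\|_{L^2}^2.
\end{equation*}
Choosing $\phi=|x|$ (and a regularization near the origin) in dimension $n\geq 4$, or the Barceló--Ruiz--Vega weight in $n=3$, the first term reproduces the tangential derivative $|\nabla^\tau_A u|^2/|x|$, the second provides the bulk $|u|^2/|x|^3$, and the magnetic cross term contracts only against $B_\tau$ since $x\cdot B=-B_\tau|x|$.

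The main analytic obstruction is to absorb the two error terms, one proportional to $B_\tau$ and one proportional to $(\partial_rV)_+$, into the good quadratic forms on the left. This is exactly what the quantitative smallness conditions $\||x|^2B_\tau\|_{L^\infty}^2+2\||x|^3(\partial_rV)_+\|_{L^\infty}<\tfrac{2}{3}(n-1)(n-3)$ (and its $n=3$ analogue with the parameter $M$ optimizing the Hardy/trace inequalities) are designed to accomplish, via Cauchy--Schwarz and the magnetic Hardy inequality $\int|\nabla_A u|^2\geq \frac{(n-2)^2}{4}\int|u|^2/|x|^2$ together with the Kato-class bound on $V_-$. The output is a magnetic smoothing estimate of the form
\begin{equation*}
\sup_{R>0}\frac{1}{R}\int_{|x|\leq R}|\nabla_A e^{itH}f|^2\,dx\,dt+\sup_{R>0}\frac{1}{R^3}\int_{|x|\leq R}|e^{itH}f|^2\,dx\,dt\lesssim\|f\|_{L^2}^2.
\end{equation*}

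To pass from smoothing to Strichartz, I would treat $H$ as a perturbation of $-\Delta$: writing $H=-\Delta+W_1\cdot\nabla+W_0$ with $W_1=-2iA$ and $W_0=V+|A|^2+i\,\mathrm{div}A$ (the last term vanishing under the Coulomb gauge), the Duhamel formula combined with the free Strichartz estimates and the just-established smoothing estimate reduces the problem to controlling $A$ and $V$ in weighted $L^\infty$-type norms---precisely the summability condition $\sum_j 2^j\sup_{C_j}|A|+\sum_j 2^{2j}\sup_{C_j}|V|<\infty$. A $TT^*$ argument together with the Christ--Kiselev lemma then yields \eqref{eq:strichhomsch} for all non-endpoint admissible pairs.

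The hard part is the endpoint estimate \eqref{eq:stricinhomsch} in $n=3$, where Christ--Kiselev fails and one cannot simply perturb. My plan there is the approach of D'Ancona--Fanelli: prove the endpoint bound directly for the free flow in the Keel--Tao spirit, then use a stronger smoothing bound with the weight $|D|^{1/2}$ (available precisely because of the fractional derivative in the left-hand side), and absorb the magnetic and electric perturbations through resolvent identities, using that $H^{1/4}$ is equivalent to the homogeneous Sobolev norm by the assumption (H2) and the functional calculus granted by self-adjointness of $H$.
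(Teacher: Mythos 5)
There is nothing in the paper to compare your argument against: Theorem \ref{thm:StriScro3} is not proved here at all, but imported verbatim from \cite{DFVV} (Theorem 1.1), so the only question is whether your sketch faithfully reconstructs the strategy of that reference and its precursors \cite{FV}, \cite{DF}. It essentially does. The two-step architecture you describe --- a Morawetz/virial multiplier argument with $\phi=|x|$ (suitably modified in $n=3$) yielding global smoothing estimates, with the smallness hypotheses on $\|{|x|^2B_\tau}\|_{L^\infty}$ and $(\partial_rV)_+$ used to absorb the bad terms via Cauchy--Schwarz and the magnetic Hardy inequality, followed by a Duhamel perturbation off the free flow controlled by the dyadic norms $\sum_j 2^j\sup_{C_j}|A|+\sum_j2^{2j}\sup_{C_j}|V|$ --- is exactly the mechanism of \cite{FV} and \cite{DFVV}, and you correctly identify the $3$D derivative-endpoint \eqref{eq:stricinhomsch} as the genuinely hard new point of \cite{DFVV}, where the equivalence $\|H^{1/4}\varphi\|_{L^2}\sim\||D|^{1/2}\varphi\|_{L^2}$ (which requires the Kato-norm bound on $V_-$, not merely (H2)) enters. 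Two caveats: first, your text is a strategy outline, not a proof --- the absorption step, the precise Morrey--Campanato duality used to close the Duhamel term, and the endpoint bilinear analysis are each asserted rather than executed; second, in the actual argument of \cite{DFVV} the inhomogeneous term is handled by pairing the Keel--Tao endpoint with the \emph{dual} smoothing estimate, both $L^2$ in time, so Christ--Kiselev is needed only for the non-endpoint exponents, consistent with (but slightly blurred in) your description. As a reconstruction of the cited proof the proposal is sound; as a self-contained proof it is incomplete, which is acceptable given that the paper itself treats the result as a black box.
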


\begin{remark}\label{rm:endpoint}
By a standard TT*-argument, for admissible couples $(p,q),(\tilde p', \tilde q')$ as in \eqref{eq:strichhomsch} it follows the inhomogeneous estimate
\begin{equation}\label{eq:inomog}
  \left\| \int_0^t e^{i(t-s)H}F(s)\,ds \right\|_{L^pL^q}
\lesssim
\|F\|_{L^{\tilde p'}L^{\tilde q'}}.
\end{equation}
On the other hand, we cannot argue the above estimate by \eqref{eq:stricinhomsch} in the 3D-endpoint case $(p,q)=(2,6)$. By the way, estimates \eqref{eq:strichhomsch} and \eqref{eq:inomog} are enough to prove the following local well-posedness result.
\end{remark}

Now, we give the statement of the local existence result for the Schr\"odinger equation in dimension $n\geq3$.

\begin{theorem}Let  A be of the form \eqref{eq:AA} or A and V satisfy the assumptions of Theorem \ref{thm:StriScro3}.
Let $1<p<1+\frac{4}{n-2}$. Given $f\in H^{1}_{A}(\R^{n})$, then there exists a unique maximal solution $u\in\mathcal{C}((-T_{*},T^{*});H^{1}_{A}(\R^{n}))$ of \eqref{eq:nlsmag} such that $u(0)=f$. It holds that if $T_{*}<\infty$ or $T^{*}<\infty$, then $\|u\|_{H^{1}_{A}}\to\infty$ as $t\downarrow -T_{*}$ or $\|u\|_{H^{1}_{A}}\to\infty$ as $t\uparrow T^{*}$ respectively. In addition, we have that conservation of the mass and the energy hold, that is for all $t\in(-T_{*},T^{*})$,
\begin{equation}
\int_{\R^{n}}|u(t,x)|^{2}\,dx=\int_{\R^{n}}|f(x)|^{2}\,dx
\end{equation}
\begin{equation}
E_{S}(t)=E_{S}(0).
\end{equation}
Moreover, if $f\in H^2_A(\R^n)$, then $u\in \mathcal{C}((-T_{*},T^{*});H^{2}_{A}(\R^{n}))\cap\mathcal{C}^1((-T_{*},T^{*});L^2(\R^{n}))$.

\end{theorem}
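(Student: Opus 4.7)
The plan is a standard contraction mapping argument on the Duhamel formulation
\[
u(t) = e^{itH}f + i\int_0^t e^{i(t-s)H}\bigl(|u|^{p-1}u\bigr)(s)\,ds,
\]
carried out in a Strichartz-type space adapted to $H$. Assumption (H1) gives the equivalence $\|H_A^{1/2}v\|_{L^2}\simeq\|\nabla_A v\|_{L^2}$, while (H2) together with Kato--Rellich transfers this equivalence from $H_A$ to $H$ modulo lower-order terms, so that $\|v\|_{H^1_A}$ and $\|v\|_{L^2}+\|H^{1/2}v\|_{L^2}$ are comparable. For an appropriate Schr\"odinger-admissible pair $(q,r)$ (chosen so that $r=p+1$ or, more generally, so that Sobolev embedding and H\"older control $|u|^{p-1}u$ in a dual Strichartz space, and so as to avoid the 3D endpoint $(2,6)$ flagged in Remark \ref{rm:endpoint}), I would set
\[
Y_T := \mathcal{C}([-T,T];H^1_A)\cap L^q([-T,T];L^r),
\]
and note that the linear piece lies in $Y_T$ for every $T$ by the homogeneous estimate of Theorem \ref{thm:StriScro3}.

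For the nonlinear term the key point is the magnetic chain/product rule: $\nabla_A(|v|^{p-1}v)$ is pointwise bounded by $C|v|^{p-1}|\nabla_A v|$, so every subsequent estimate on $|u|^{p-1}u$ at the $H^1_A$ level reduces to the non-magnetic situation. Combining this bound with H\"older in space-time, the inhomogeneous Strichartz estimate of Remark \ref{rm:endpoint}, and the Sobolev embedding dictated by the energy-subcriticality $p<1+\frac{4}{n-2}$, one shows that the Duhamel map preserves a ball of $Y_T$ and is a contraction there for $T$ small, depending only on $\|f\|_{H^1_A}$. Gluing maximal intervals of existence yields a unique solution on $(-T_*,T^*)$; the blow-up alternative $\|u(t)\|_{H^1_A}\to\infty$ as $t\uparrow T^*$ (or $t\downarrow -T_*$) whenever the interval is bounded follows by contradiction from the fact that the local existence time depends only on the $H^1_A$ norm.

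Mass conservation is obtained by pairing the equation with $\bar u$ and taking imaginary parts; energy conservation is obtained by pairing with $\bar u_t$, using self-adjointness of $H$ to integrate by parts, and then invoking the equation itself to deduce $\frac{d}{dt}E_S\equiv 0$. Both identities first need to be justified at the $H^2_A$ level, where the relevant integrations by parts are legal and $u_t\in\mathcal{C}(L^2)$; to obtain that persistence I would apply $H$ to the Duhamel formula and run an analogous fixed point in $\mathcal{C}(H^2_A)$, using that $H$ commutes with the linear propagator and that $H(|u|^{p-1}u)$ is again controlled via the magnetic chain/product rule together with the Kato--Rellich bound on $V$. The extension of the conservation laws from $H^2_A$ to $H^1_A$ data is then a density argument, using the continuous dependence on initial data supplied by the contraction in $Y_T$. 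The main obstacle I expect is to choose the Strichartz pair consistently across the whole energy-subcritical range $1<p<1+\frac{4}{n-2}$ so that the chain-rule bound really lands in the dual Strichartz norm and the inhomogeneous estimate of Remark \ref{rm:endpoint} is applicable; the three-dimensional case requires special care precisely because the endpoint $(2,6)$ is unavailable for the inhomogeneous bound.
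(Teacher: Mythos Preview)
Your proposal is correct and follows essentially the same route as the paper: the paper's proof is a brief sketch that says local existence ``goes straightforward just applying the corresponding Strichartz estimates,'' derives mass and energy conservation by multiplying by $\bar u$ and $\bar u_t$ respectively, justifies these at the $H^2_A$ level, and then passes to $H^1_A$ by density, with $H^2_A$-persistence cited from \cite{CE}. You have simply spelled out the contraction-mapping and chain-rule details that the paper leaves implicit.
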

\begin{proof}
The proof of the local existence goes straightforward just applying the corresponding Strichartz estimates depending on the magnetic potential $A$ we are considering. The conservation of the mass follows by multiplying the equation \eqref{eq:nlsmag} by $\bar{u}$, integrating by parts and taking the resulting imaginary part. The conservation of the energy comes from multiplying the equation \eqref{eq:nlsmag} by $\bar{u_t}$, integrating by parts and taking the resulting real part. All the computations are justified for solutions $u\in H^2_A(\R^n)$. Then, proceed by a density argument in order to obtain the result for $u\in H^1_A(\R^n)$. The proof of the persistence of the solution in $H^2_A(\R^n)$ for a given $f\in H^2_A(\R^n)$ can be found for $n=3$ in \cite{CE}.
\end{proof}
\begin{remark}
We also want to mention the existence result appearing in \cite{CE} for the $3$D case. They consider a constant magnetic field that, without loss of generality, can be assumed to be 
\begin{equation}
B=(0,0,b),
\end{equation}
for some $b\in\R-\{0\}$. Therefore, up to a gauge transform, the magnetic potential $A$ can be chosen in the following way
\begin{equation}
A=\frac{b}{2}(-y,x,0).
\end{equation}
\end{remark}

\begin{remark}Notice that in \cite{EL}, general nonlinearities of the type $g(x,u)$ are considered. The version we present here is the corresponding to power-type nonlinearities.
\end{remark}

For the case of $A\equiv0$, the initial value problem for the nonlinear Schr\"odinger equation has been studied in the past by, among other, Ginibre and Velo (See \cite{GV1}, \cite{GV2}, \cite{GV3}) , Kato \cite{K} or Cazenave and Weissler (See \cite{CW1}, \cite{CW2}) . The methods are of a perturbative nature and rely basically on sharp dispersive properties of the linear equation.

Concerning the nonlinear magnetic wave equation \eqref{eq:nlwmaggen}, a local existence result can be proved. As in the Schr\"odinger case, the result is based in the Strichartz estimates for the solution $u$ of the linear version of the equation \eqref{eq:nlwmaggen}. The estimates that we are going to consider appear in \cite{FV}. We include the statement of the theorem.

\begin{theorem}\label{thm:StriWav3}
Let $n\geq3$. Assume (H1), (H2) and either
\begin{equation}
\||x|^3B\|_{L^1_rL^{\infty}(S_r)}+\||x|^2(\partial_rV)_{+}\|_{L^1_rL^{\infty}(S_r)}\leq\frac{1}{2},
\end{equation}
for $n=3$ or
\begin{equation}
|B_\tau(x)|\leq\frac{C_1}{|x|^2},\qquad |(\partial_rV)_{+}\|\leq\frac{C_2}{|x|^3}, \qquad C_1^2+2C_2\leq\frac{2}{3}(n-1)(n-3),
\end{equation}
for $n\geq4$. Moreover assume that
\begin{equation}
|B(x)|\leq\frac{C}{(1+|x|)^{2+\delta}},\qquad |V(x)|\leq\frac{C}{(1+|x|)^{2+\delta}},
\end{equation}
for some $C>0$ and some $\delta>0$. Then, for any non-endpoint admissible couple (p,q), the following Strichartz estimates hold:
\begin{equation}
\|u\|_{L^p\dot{H}^\sigma_q}\lesssim\|f\|_{\dot{\mathcal{H}}^1}+\|g\|_{L^2},
\end{equation}
\begin{equation}
\frac{2}{p}+\frac{n-1}{q}=\frac{n-1}{2}, \qquad2\leq p\leq\infty, \qquad \frac{2(n-1)}{n-3}\geq q\geq 2,\qquad q\neq\infty.
\end{equation}
\end{theorem}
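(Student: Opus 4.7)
The plan is to establish these Strichartz estimates by combining a Morawetz-type local smoothing estimate for the linear magnetic wave equation with a perturbation argument against the free wave propagator. This is the standard strategy used for such electromagnetic Strichartz results in the absence of resonances; the hypotheses on $B_\tau$ and $(\partial_r V)_+$ are precisely the quantitative ingredients needed to make the positivity/absorption step in the Morawetz identity work.

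First I would prove a local smoothing estimate for $u_{tt}+Hu=0$ via the multiplier
\begin{equation*}
  M u \;=\; \phi'(r)\,\partial_r u \,+\, \tfrac{n-1}{2}\tfrac{\phi(r)}{r}\, u,
\end{equation*}
with $\phi$ a suitably chosen radial weight (e.g.\ $\phi(r)\sim r$ for $r$ large, smoothed near the origin). Computing $\frac{d}{dt}\mathrm{Re}\langle u_t, Mu\rangle$ and integrating by parts produces, on one side, a positive bulk term involving the tangential derivatives $|\nabla^{\tan}_A u|^2/r$ and the time derivative $|u_t|^2$, and on the other side error terms of the form $B_\tau\cdot\mathrm{Im}(\bar u\nabla_A u)$, $(\partial_r V)|u|^2$, and a centrifugal term with coefficient $(n-1)(n-3)/(4r^2)$. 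The smallness assumptions $\||x|^3 B\|_{L^1_r L^\infty(S_r)}+\||x|^2(\partial_r V)_+\|_{L^1_r L^\infty(S_r)}\le \tfrac12$ in $n=3$ and the pointwise bounds $|B_\tau|\le C_1|x|^{-2}$, $(\partial_r V)_+\le C_2|x|^{-3}$ with $C_1^2+2C_2\le \tfrac23(n-1)(n-3)$ in $n\ge 4$ are exactly what is needed to absorb the error terms into the positive bulk, yielding a smoothing estimate of Morawetz type that controls $\||x|^{-1/2}\nabla_A u\|_{L^2_t L^2_x}$ and the tangential analogue by $\|f\|_{\dot{\mathcal H}^1}+\|g\|_{L^2}$.

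Second, I would upgrade the smoothing estimate to Strichartz by a perturbation of the free wave equation. Writing $H=-\Delta + W$ with $W u := -2iA\cdot\nabla u+(|A|^2+V)u$ (using the Coulomb gauge $\nabla\cdot A=0$) and applying Duhamel against $e^{it\sqrt{-\Delta}}$, one reduces the magnetic Strichartz bound to the free Strichartz inequality for the source $W u$. The pointwise decay $|B(x)|,|V(x)|\le C(1+|x|)^{-2-\delta}$ (which controls $A$ via the Coulomb gauge and standard Biot--Savart-type estimates) provides the weights needed to pair with the smoothing estimate and close the fixed point. The nonendpoint restriction and the admissibility range $\frac{2}{p}+\frac{n-1}{q}=\frac{n-1}{2}$, $q\le 2(n-1)/(n-3)$ come directly from the corresponding range for the free wave equation, the endpoint being excluded by the usual failure of the $TT^*$ argument at $(2,2(n-1)/(n-3))$.

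The principal difficulty lies in the quantitative positivity analysis of the multiplier identity: the $B_\tau$ term and the $(\partial_r V)_+$ term must be absorbed simultaneously, and the sharp constants in the hypotheses are dictated by sharp Hardy-type inequalities used to bound these perturbations by the bulk quadratic form. Dimension $n=3$ is the hardest case since the repulsive centrifugal constant $(n-1)(n-3)/4$ degenerates to zero, which explains why the hypothesis there is phrased as an integrated $L^1_r L^\infty(S_r)$ smallness condition rather than a pointwise bound. Once the Morawetz step is in place, the passage to Strichartz is essentially mechanical and follows along the lines of \cite{FV}.
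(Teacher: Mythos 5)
This theorem is not proved in the paper at all: it is quoted from Fanelli--Vega \cite{FV} (the text says explicitly ``The estimates that we are going to consider appear in \cite{FV}. We include the statement of the theorem.''), so there is no internal argument to compare yours against. Your outline --- a Morawetz/virial multiplier identity of the form $\phi'\partial_r u+\tfrac{n-1}{2}\tfrac{\phi}{r}u$ whose error terms in $B_\tau$ and $(\partial_rV)_{+}$ are absorbed by the positive bulk using exactly the stated smallness conditions, followed by a Duhamel perturbation off the free wave propagator using the decay of $B$ and $V$ --- is precisely the strategy carried out in that reference, so your approach coincides with the actual source; the only caveat is that what you have written is a plan rather than a proof, with the sharp-constant absorption step (the real content) left unexecuted, as you yourself acknowledge.
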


\begin{remark}
We consider the analogous of Remark \ref{rm:endpoint} for the case of the wave equation.
\end{remark}

Relying on the last result, the following theorem for the local existence can be proved.

\begin{theorem} Let $A$ and $V$ satisfy the assumptions of Theorem \ref{thm:StriWav3}.
Let $1<p<1+\frac{4}{n-2}$. Given $f\in H^{1}_{A}(\R^{n})$, $g\in L^{2}(\R^{n})$, then there exists a unique maximal solution $u\in\mathcal{C}([0,T^{*});H^{1}_{A}(\R^{n}))$, $u_{t}\in\mathcal{C}([0,T^{*});L^{2}(\R^{n}))$ of \eqref{eq:nlwmaggen} such that $u(0)=f$, $u_{t}(0)=g$. In addition, we have that conservation of the energy holds, that is for all $t\in[0,T^{*})$,
\begin{equation}
E_{W}(t)=E_{W}(0).
\end{equation}
\end{theorem}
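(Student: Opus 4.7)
The strategy is the standard Strichartz-based contraction-mapping scheme applied to the Duhamel form of \eqref{eq:nlwmaggen}, followed by a density argument for energy conservation. First, I would rewrite the Cauchy problem as the fixed-point equation
\[
u(t)=\cos(t\sqrt{H})f+\frac{\sin(t\sqrt{H})}{\sqrt{H}}g+\int_{0}^{t}\frac{\sin((t-s)\sqrt{H})}{\sqrt{H}}\,|u|^{p-1}u(s)\,ds=:\Phi(u)(t),
\]
where the linear propagators are defined by the functional calculus afforded by the self-adjointness of $H$ (coming from (H1), (H2)).

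Next, I would fix a non-endpoint wave-admissible couple $(q,r)$ (in the sense of Theorem \ref{thm:StriWav3}) with the $\sigma$-derivative loss, combine the homogeneous estimate of Theorem \ref{thm:StriWav3} with its inhomogeneous counterpart (the remark after that theorem) and with the obvious energy-level bound
\[
\|\cos(t\sqrt{H})f\|_{\mathcal{C}(H^{1}_{A})}+\left\|\frac{\sin(t\sqrt{H})}{\sqrt{H}}g\right\|_{\mathcal{C}(H^{1}_{A})}\lesssim \|f\|_{H^{1}_{A}}+\|g\|_{L^{2}},
\]
and similarly for $u_{t}$ in $\mathcal{C}(L^{2})$. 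Using Sobolev embedding $\dot{H}^{\sigma}_{r}\hookrightarrow L^{r^{*}}$, the energy-subcritical condition $p<1+\tfrac{4}{n-2}$ lets me estimate
\[
\||u|^{p-1}u\|_{L^{\tilde q'}L^{\tilde r'}}\lesssim T^{\theta}\|u\|^{p}_{L^{q}L^{r^{*}}}
\]
for some $\theta>0$ (H\"older in time). The corresponding difference bound, using $\bigl||a|^{p-1}a-|b|^{p-1}b\bigr|\lesssim(|a|^{p-1}+|b|^{p-1})|a-b|$, makes $\Phi$ a strict contraction on a closed ball of
\[
X_{T}=\bigl\{u:u\in\mathcal{C}([0,T];H^{1}_{A}),\ u_{t}\in\mathcal{C}([0,T];L^{2}),\ u\in L^{q}([0,T];\dot{H}^{\sigma}_{r})\bigr\}
\]
for $T$ sufficiently small. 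The Banach fixed-point theorem gives a unique local solution, and a standard maximality argument produces the interval $[0,T^{*})$.

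Finally, for the conservation of energy I would first take regularized data $(f,g)\in H^{2}_{A}\times H^{1}_{A}$, rerun the contraction in a higher-regularity norm, and, differentiating the Duhamel formula in $t$, obtain a solution with $u\in\mathcal{C}(H^{2}_{A})\cap\mathcal{C}^{1}(H^{1}_{A})\cap\mathcal{C}^{2}(L^{2})$. Such a classical solution can legitimately be multiplied by $\overline{u_{t}}$ and integrated over $\R^{n}$; taking the real part, using the self-adjointness of $H$, and the chain rule for $|u|^{p+1}$ give $\tfrac{d}{dt}E_{W}(t)=0$. A density argument based on continuous dependence of the flow on the data in $H^{1}_{A}\times L^{2}$ then extends $E_{W}(t)=E_{W}(0)$ to arbitrary $(f,g)\in H^{1}_{A}\times L^{2}$.

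The main obstacle, and the only point not purely mechanical once the Strichartz machinery of Theorem \ref{thm:StriWav3} is invoked, is to check that the restricted admissibility window $2\leq q\leq 2(n-1)/(n-3)$ together with the derivative loss $\sigma$ actually supplies a pair $(q,r)$ and a Sobolev embedding $\dot H^{\sigma}_{r}\hookrightarrow L^{r^{*}}$ covering the \emph{full} subcritical range $1<p<1+\tfrac{4}{n-2}$; near the energy-critical exponent one must push $(q,r)$ toward the top of the admissible window (where the time margin $T^{\theta}$ shrinks), while for $p$ close to $1$ the contraction can already be closed using only the energy-norm contribution. Once the exponent bookkeeping is verified, the rest is the standard contraction-plus-density argument.
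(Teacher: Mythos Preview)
Your proposal is correct and follows essentially the same route as the paper: a fixed-point argument based on the Strichartz estimates of Theorem~\ref{thm:StriWav3} (and their inhomogeneous counterpart) for local existence, then energy conservation obtained by multiplying the equation by $\overline{u_{t}}$, integrating by parts and taking the real part, first for $H^{2}_{A}$-regular solutions and then by density. The paper merely sketches this and defers the exponent bookkeeping to \cite{FV}, Theorem~1.13, so your more explicit discussion of the admissibility window is in fact a refinement rather than a deviation.
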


\begin{proof}
The proof of the local existence follows by a fixed point argument just applying the Strichartz estimates given above (see \cite{FV}, Theorem 1.13). The conservation of the energy follows by multiplying the equation \eqref{eq:nlwmaggen} by $\bar{u_t}$, integrating by parts and taking the real part. All of the computations are justified for solutions $u\in H^2_A(\R^n)$. Then by a density argument, we obtain the result for solutions $u\in H^1_A(\R^n)$.
\end{proof}

In addition, for $L^{2}$-subcritical nonlinearities i.e., $1<p<1+\frac{4}{n}$,  the solution of the Schr\"odinger equation \eqref{eq:nlsmag} is in fact global in time. The theorem reads as follows.

\begin{theorem}
Let $1<p<1+\frac{4}{n}$ and $f\in H^{1}_{A}(\R^{n})$. Let $u\in\mathcal{C}((-T_{*},T^{*});H^{1}_{A}(\R^{n}))$ be a local solution of \eqref{eq:nlsmag}, and maximal time of existence. Then $T_{*}=T^{*}=\infty$. 
\end{theorem}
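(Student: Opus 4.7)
The plan is to argue by contradiction using the blow-up alternative provided by the local existence theorem: if $T^{*}<\infty$, then $\|u(t)\|_{H^{1}_{A}}\to\infty$ as $t\uparrow T^{*}$ (and symmetrically at $-T_{*}$). So I only need an a priori bound on $\|u(t)\|_{H^{1}_{A}}$ on every compact subinterval of the maximal interval of existence, which forces $T_{*}=T^{*}=\infty$. Mass conservation immediately gives $\|u(t)\|_{L^{2}}=\|f\|_{L^{2}}$, so the whole task reduces to controlling the magnetic kinetic term $\|\nabla_{A}u(t)\|_{L^{2}}$.

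I would start from energy conservation and solve for the kinetic energy:
\begin{equation*}
\tfrac{1}{2}\|\nabla_{A}u(t)\|_{L^{2}}^{2}=E_{S}(0)-\tfrac{1}{2}\int V|u|^{2}+\tfrac{1}{p+1}\|u(t)\|_{L^{p+1}}^{p+1}.
\end{equation*}
To handle the potential term I would use (H2): the Kato--Rellich bound on $V$ with small constant implies a form bound
\begin{equation*}
\left|\int V|u|^{2}\,dx\right|\leq \delta\|\nabla_{A}u\|_{L^{2}}^{2}+C_{\delta}\|u\|_{L^{2}}^{2}
\end{equation*}
for any $\delta>0$ (KLMN), which together with mass conservation lets me absorb half of the $V$-term into the left-hand side.

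The core estimate is a \emph{magnetic Gagliardo--Nirenberg inequality}. Here the diamagnetic inequality $|\nabla|u||\leq|\nabla_{A}u|$ pointwise a.e.\ allows me to apply the classical Gagliardo--Nirenberg inequality to $|u|$:
\begin{equation*}
\|u\|_{L^{p+1}}^{p+1}=\||u|\|_{L^{p+1}}^{p+1}\leq C\|\nabla|u|\|_{L^{2}}^{\frac{n(p-1)}{2}}\|u\|_{L^{2}}^{p+1-\frac{n(p-1)}{2}}\leq C\|\nabla_{A}u\|_{L^{2}}^{\frac{n(p-1)}{2}}\|f\|_{L^{2}}^{p+1-\frac{n(p-1)}{2}}.
\end{equation*}
The $L^{2}$-subcritical assumption $p<1+\tfrac{4}{n}$ is exactly the condition $\frac{n(p-1)}{2}<2$, so Young's inequality converts this into $\|u\|_{L^{p+1}}^{p+1}\leq \eta\|\nabla_{A}u\|_{L^{2}}^{2}+C(\eta,\|f\|_{L^{2}})$ with $\eta$ arbitrarily small.

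Inserting these two bounds into the energy identity with $\delta,\eta$ small enough yields
\begin{equation*}
\|\nabla_{A}u(t)\|_{L^{2}}^{2}\leq C\bigl(E_{S}(0),\|f\|_{L^{2}}\bigr),
\end{equation*}
uniformly in $t$ in the maximal interval. Combined with $\|u(t)\|_{L^{2}}=\|f\|_{L^{2}}$, this bounds $\|u(t)\|_{H^{1}_{A}}$ uniformly, contradicting the blow-up alternative unless $T_{*}=T^{*}=\infty$. The only delicate point I anticipate is the rigorous passage from the operator inequality in (H2) to the quadratic form bound on $\int V|u|^{2}$, but this is standard KLMN theory and is implicit already in the self-adjointness consequences the paper draws from (H1)--(H2); everything else is conservation laws plus a diamagnetic Gagliardo--Nirenberg argument.
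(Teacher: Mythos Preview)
Your proposal is correct and follows exactly the approach the paper indicates: the paper's own proof is the single sentence ``It can be derived directly from the conservation of the energy,'' and your argument is precisely the standard expansion of that line via mass/energy conservation, the diamagnetic Gagliardo--Nirenberg inequality, and the blow-up alternative. The only point to tighten is the form bound on $\int V|u|^{2}$: rather than extracting it from (H2) alone, it follows cleanly from the paper's standing assumption $V\in L^{r}+L^{\infty}$ with $r>n/2$ together with the diamagnetic inequality.
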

\begin{proof}
It can be derived directly from the conservation of the energy.
\end{proof}

\section{Magnetic virial identities}\label{sec:magneticvirial}

In this section we present the virial identities for the magnetic Schr\"odinger equation and the magnetic wave equation in any dimension. For the free nonlinear Schr\"odinger equation the identity is due to Zakharov \cite{Z} and Glassey \cite{Glass}. When the free equation is perturbed by a electromagnetic potential, Fanelly and Vega (see \cite{FV}) performed the corresponding virial identities for the linear Sch\"odinger and linear wave equations. For the Schr\"odinger equation in 3-D and magnetic potential $A$ of the form \eqref{eq:GRA} a virial identity appears in  \cite{GR1}, \cite{GR}. Theorem \ref{thm:virial} gives the virial identity for Schr\"odinger, while in Theorem \ref{thm:virialwave} appears the corresponding one for the wave equation. The identities are the same ones of \cite{FV}, just adding the term corresponding to the nonlinear contribution. We have to say that in despite of for the wave equation we do not use the virial identity in order to prove the blow up of the solution, we will include it for the sake of completeness.

\begin{theorem}[Virial for magnetic nonlinear Schr\"odinger] \label{thm:virial}Let $\phi:\R^{n}\to\R$ be a radial, real-valued multiplier, $\phi=\phi(|x|)$, and let
\begin{equation}
	\Theta_{S}(t)=\int_{\R^{n}}\phi|u|^{2}\,dx.
\end{equation}
Then, for any solution u of the magnetic nonlinear Schr\"odinger equation \eqref{eq:nlsmag} with initial datum $f\in L^{2}$, $H_{A}f\in L^{2}$, the following virial-type identities hold:
\begin{equation}\label{eq:virial1}
	\dot{\Theta}_{S}(t)=2\Im\int_{\R^{n}}\bar{u}(t,x)\nabla_{A}u(t,x)\cdot \nabla\phi(x)\,dx.
\end{equation}
\begin{align}\label{eq:virial2}
	\ddot{\Theta}_{S}(t)=&4\int_{\R^{n}}\nabla_{A}uD^{2}\phi\overline{\nabla_{A}u}\,dx-\int_{\R^{n}}|u|^{2}\Delta^{2}\phi\,dx
	-2\int_{\R^{n}}\phi'V_{r}|u|^{2}\,dx\\
	&+4\Im\int_{\R^{n}}u\phi'B_{\tau}\cdot\overline{\nabla_{A}u}\,dx-2\frac{(p-1)}{(p+1)}\int_{\R^{n}}|u|^{p+1}\Delta\phi\,dx.
	\nonumber
\end{align}
where
\begin{equation*}
	(D^{2}\phi)_{jk}=\frac{\partial^{2}}{\partial x^{j}\partial x^{k}}\phi,\qquad\Delta^{2}\phi=\Delta(\Delta\phi),
\end{equation*}
for $j,k=1,\dots,n$, are respectively the Hessian matrix and the bi-Laplacian of $\phi$.

\end{theorem}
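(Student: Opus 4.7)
The plan is a direct computation: differentiate $\Theta_S(t)$ in time twice, substitute \eqref{eq:nlsmag} for $u_t$, and integrate by parts for the magnetic covariant derivative. All formal manipulations will be justified first under the regularity $u(t)\in H^2_A(\R^n)$, then extended to $H^1_A$ by density.

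\textbf{First identity.} From $\dot\Theta_S=2\Re\int\phi u_t\bar u$ and $iu_t = Hu-|u|^{p-1}u$, the nonlinear contribution drops (since $\phi|u|^{p+1}$ is real, $2\Re(i\cdot\text{real})=0$) and so does the $V$-term, leaving $\dot\Theta_S=-2\Im\int\phi\bar u\nabla_A^2 u$. A single application of the magnetic IBP $(\nabla_A)^\ast=-\nabla_A\cdot$, using $\nabla_A(\phi u)=\phi\nabla_A u+u\nabla\phi$, gives
\[
\int\phi\bar u\nabla_A^2 u=-\int\phi|\nabla_A u|^2-\int\bar u\nabla\phi\cdot\nabla_A u,
\]
and taking imaginary parts is \eqref{eq:virial1}.

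\textbf{Second identity.} I differentiate \eqref{eq:virial1} in time, insert the equation again, and split the result into a linear piece (from $Hu$) and a nonlinear piece (from $|u|^{p-1}u$). The linear piece is a commutator of the form $i\langle u,[H,\nabla\phi\cdot\nabla_A+\overline{\nabla_A\cdot\nabla\phi}]u\rangle$, and after repeated magnetic IBPs reproduces exactly the linear magnetic virial identity of Fanelli--Vega \cite{FV}: one recovers the Hessian term $4\int\nabla_A u\,D^2\phi\,\overline{\nabla_A u}$, the bi-Laplacian $-\int|u|^2\Delta^2\phi$, the radial electric term $-2\int\phi' V_r|u|^2$ (using radiality of $\phi$), and the magnetic trapping term $4\Im\int u\phi' B_\tau\cdot\overline{\nabla_A u}$. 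For the nonlinear piece the computation is short: since $\nabla_A(|u|^{p-1}u)=|u|^{p-1}\nabla_A u+u\nabla(|u|^{p-1})$ the magnetic contributions telescope out and, using $|u|^2\nabla(|u|^{p-1})=\frac{p-1}{p+1}\nabla(|u|^{p+1})$, one final IBP against $\nabla\phi$ yields $-\frac{2(p-1)}{p+1}\int|u|^{p+1}\Delta\phi$ with the stated coefficient.

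\textbf{Main obstacle.} The delicate point is the magnetic bookkeeping in the linear-kinetic part of Step~2: two covariant derivatives must be reordered via $[\nabla_A^j,\nabla_A^k]=-iB_{jk}$ for the field $B$ to appear, and essentially all of the resulting magnetic cross-terms must then be shown to cancel after further IBPs, leaving only the contraction of $B$ with the radial direction $\nabla\phi=\phi'(r)\,x/|x|$. By the definition $B_\tau=(x/|x|)B$ this residual contraction is exactly $\phi'(r)B_\tau$, which explains the precise form of the trapping term in \eqref{eq:virial2}. The cancellation uses crucially the Coulomb gauge $\nabla\cdot A=0$ and the radiality of $\phi$.
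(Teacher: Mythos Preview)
Your proposal is correct and matches the paper's approach: invoke the linear magnetic virial of Fanelli--Vega \cite{FV} for the $H$-part and then account for the nonlinearity by a separate integration by parts. The paper's framing is marginally slicker---it absorbs the nonlinearity into a modified potential $\tilde V=V-|u|^{p-1}$ so that the extra term emerges solely from the $-2\int\phi'\tilde V_r|u|^2$ piece---but the computation is identical; one small correction: the Coulomb gauge is not actually needed for \eqref{eq:virial2}, since the identity is expressed in gauge-invariant quantities ($\nabla_A u$, $B_\tau$) and holds for any gauge.
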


\begin{proof}
The proof of \eqref{eq:virial2} comes from the one performed in \cite{FV} for the linear version of the equation \eqref{eq:nlsmag}. More precisely, instead of considering the electric potential $V$ we just apply the results in \cite{FV} for the potential created adding the nonlinear contribution, namely 
\begin{equation}
\tilde{V}=V-|u|^{p-1}. 
\end{equation}
Therefore, just recalling that for the linear equation,
\begin{equation}
\dot{\Theta}_{S}(t)=2\Im\int \bar{u}\nabla_{A}u\cdot\nabla\phi\,dx,
\end{equation}
clearly this term is not affected by the nonlinear part. Now, if we consider the second derivative of $\Theta_S$, for the linear version of the equation we got
\begin{align}\label{eq:viriallinear}
	\ddot{\Theta}_{S}(t)=&4\int_{\R^{n}}\nabla_{A}uD^{2}\phi\overline{\nabla_{A}u}\,dx-\int_{\R^{n}}|u|^{2}\Delta^{2}\phi\,dx\\
	&-2\int_{\R^{n}}\phi'V_{r}|u|^{2}\,dx
	+4\Im\int_{\R^{n}}u\phi'B_{\tau}\cdot\overline{\nabla_{A}u}\,dx.
	\nonumber
\end{align}
\begin{remark}\label{rm:regularity}
As is pointed out in \cite{FV}, in the computations which lead us to \eqref{eq:viriallinear} the highest order term in $u$ that appears is of the form
\begin{equation*}
\int\nabla^2_Au\nabla\phi\cdot\overline{\nabla_{A}u};
\end{equation*}
it makes sense since $f\in L^2$, $H_Af\in L^2$, which implies $H_Ae^{itH}f\in L^2$, and by interpolation $\nabla_Ae^{itH}f\in L^2$.
\end{remark}
The main difference appears in the term involving $V$. We include the details in order to be clear. We proceed as follows just considering the potential $\tilde{V}$. Since $V\in\mathcal{C}^1$, it holds
\begin{equation}\label{eq:derVtilde}
-\int\phi'\tilde{V}_{r}|u|^{2}=-\int\nabla\phi\cdot\nabla \tilde{V}|u|^{2}=-\int\phi'V_{r}|u|^{2}+\int\nabla\phi\cdot\nabla(|u|^{p-1})|u|^{2}.
\end{equation}
Integrating by parts, noting that the unique solution $u$ of \eqref{eq:nlsmag} stays in $H^1_A$ for all time, we obtain
\begin{align}\label{eq:nolin1}
\int\nabla\phi\cdot\nabla(|u|^{p-1})|u|^{2}=&-\int\Delta\phi|u|^{p+1}-\int|u|^{p-1}\nabla\phi\cdot\nabla(|u|^2)
\nonumber
\\
=&-\int\Delta\phi|u|^{p+1}-2\Re\int|u|^{p-1}\nabla\phi\cdot\nabla u\bar{u}
\nonumber
\\
=&-\int\Delta\phi|u|^{p+1}-\frac{2}{p+1}\int\nabla\phi\cdot\nabla(|u|^{p+1})
\nonumber
\\
=&-\frac{p-1}{p+1}\int\Delta\phi|u|^{p+1}.
\end{align}
\begin{remark}Notice that all the computations above are justified since $u\in H^1_A(\R^n)$, just applying Sobolev embedding.
\end{remark}
The result follows directly from \eqref{eq:viriallinear}, \eqref{eq:derVtilde} and \eqref{eq:nolin1}.

\end{proof}

\begin{theorem}[Virial for magnetic nonlinear wave]\label{thm:virialwave} Let $\phi,\Psi:\R^{n}\to\R$, be two radial, real-valued multipliers, and let
\begin{equation}
\Theta_{W}(t)=\int_{\R^{n}}\left(\phi|u_{t}|^{2}+\phi|\nabla_{A}u|^{2}-\frac{1}{2}(\Delta\phi)|u|^{2}\right)\,dx+\int_{\R^{n}}|u|^{2}\phi V\,dx+\int_{\R^{n}}|u|^{2}\Psi\,dx.
\end{equation}
Then, for any solution $u$ of the magnetic nonlinear wave equation \eqref{eq:nlwmaggen} with initial data $f,g\in L^{2}$, $H_{A}f,H_{A}g\in L^{2}$, the following virial-type identity holds:
\begin{align}\label{eq:virial3}
\ddot{\Theta}_{W}(t)=&2\int_{\R^{n}}\nabla_{A}uD^{2}\phi\overline{\nabla_{A}u}\,dx-\frac{1}{2}\int_{\R^{n}}|u|^{2}\Delta^{2}\phi\,dx\\
&+2\int_{\R^{n}}|u_{t}|^{2}\Psi\,dx-2\int_{\R^{n}}|\nabla_{A}u|^{2}\Psi\,dx+\int_{\R^{n}}|u|^{2}\Delta\Psi\,dx
\nonumber
\\
&-\int_{\R^{n}}\phi' V_{r}|u|^{2}\,dx-2\int_{\R^{n}}\Psi V|u|^{2}\,dx+2\Im\int_{R^{n}}u\phi' B_{\tau}\cdot\overline{\nabla_{A}u}\,dx
\nonumber
\\
&+\int_{\R^{n}}\left(2\Psi-\frac{p-1}{p+1}\Delta\phi\right)|u|^{p+1}\,dx.
\nonumber
\end{align}

\end{theorem}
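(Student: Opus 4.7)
The plan is to mirror the argument used for Theorem~\ref{thm:virial}: view $u$ as solving a linear magnetic wave equation with the effective, $u$-dependent electric potential $\tilde V:=V-|u|^{p-1}$, invoke the linear-wave virial identity of \cite{FV} for such a Hamiltonian, and then integrate by parts to convert the spatial derivatives of $|u|^{p-1}$ that arise into the explicit $|u|^{p+1}$ contribution on the last line of \eqref{eq:virial3}.

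I would first apply the linear-wave virial identity of \cite{FV} with $V$ replaced by $\tilde V$. Because that derivation uses only spatial properties of the electric potential (the factor $V_r=\nabla V\cdot x/|x|$ is produced by integration by parts against $\nabla\phi$, and $\Psi V$ by pairing with the zero-order multiplier $\Psi$) and never its time derivative, this substitution is legitimate even though $\tilde V$ is now time-dependent through $u$. This step produces every term of \eqref{eq:virial3} except the final $|u|^{p+1}$ line, but with $\tilde V$ in place of $V$ in the two occurrences $-\int\phi'V_r|u|^2\,dx$ and $-2\int\Psi V|u|^2\,dx$.

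Next I would isolate the nonlinear contributions. In the first of those two terms $\tilde V_r=V_r-(|u|^{p-1})_r$, so, using $\phi'x/|x|=\nabla\phi$, the extra piece equals $\int\nabla\phi\cdot\nabla(|u|^{p-1})|u|^2\,dx$, which by the two integrations by parts performed in \eqref{eq:nolin1} (relying on $2\Re(\bar u\,\nabla u)=\nabla(|u|^2)$) collapses to $-\tfrac{p-1}{p+1}\int\Delta\phi\,|u|^{p+1}\,dx$. The second term contributes, directly from the substitution, the extra $+2\int\Psi|u|^{p+1}\,dx$. Summing these two contributions yields exactly the last line of \eqref{eq:virial3}.

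The main difficulty is not algebraic but the regularity required to justify all the integrations by parts. The linear identity of \cite{FV} already requires finiteness of the highest-order term $\int\nabla_A^2 u\cdot\nabla\phi\,\overline{\nabla_A u}\,dx$, so, following the spirit of Remark~\ref{rm:regularity}, I would first carry out the computation for data with $f,H_Af\in L^2$ and $g,H_Ag\in L^2$, for which the local existence theory propagates $u(t)\in H^2_A$ and $u_t(t)\in L^2$. In that class, Sobolev embedding places $|u|^{p-1}$ in a space that makes the manipulations of the nonlinear term meaningful throughout the subcritical range $1<p<1+\tfrac{4}{n-2}$; the identity for general $f\in H^1_A$, $g\in L^2$ then follows by a density argument based on the Strichartz estimates of Theorem~\ref{thm:StriWav3}.
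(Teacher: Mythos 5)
Your proposal reproduces the paper's own (very terse) proof: substitute $\tilde V=V-|u|^{p-1}$ into the linear wave virial of \cite{FV} and convert the resulting nonlinear pieces by the integrations by parts of \eqref{eq:nolin1}; and the algebra in your second paragraph does produce exactly the last line of \eqref{eq:virial3}. However, the step you explicitly flag as ``legitimate even though $\tilde V$ is now time-dependent'' is precisely where the argument has a genuine gap, and the gap is specific to the wave case. Unlike $\Theta_S=\int\phi|u|^2$, the functional $\Theta_W$ contains the potential explicitly through $\int\phi V|u|^2$. In the linear derivation the reduction
\[
\dot\Theta_W=-2\Re\int(\nabla\phi\cdot\nabla_{A}u)\,\bar u_t-\Re\int\Delta\phi\,u_t\bar u+2\Re\int\Psi\,u_t\bar u=:M(t)
\]
is obtained because $\tfrac{d}{dt}\int\phi V|u|^2=\int\phi V\,\partial_t(|u|^2)$ cancels against the $V$-term created by substituting $u_{tt}=\Delta_Au-Vu$ into $2\Re\int\phi u_{tt}\bar u_t$; this cancellation uses $\partial_tV=0$. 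For the nonlinear flow, with $\Theta_W$ as defined in the statement, the same computation gives instead
\[
\dot\Theta_W=M(t)+\frac{2}{p+1}\frac{d}{dt}\int\phi|u|^{p+1},
\qquad\text{hence}\qquad
\ddot\Theta_W=\dot M(t)+\frac{2}{p+1}\frac{d^2}{dt^2}\int\phi|u|^{p+1}.
\]
The quantity $\dot M(t)$ is indeed the right-hand side of \eqref{eq:virial3} --- here your $\tilde V$-substitution and the manipulation via \eqref{eq:nolin1} are correct, since $M$ contains no explicit potential and $\tilde V$ enters $\dot M$ only through the equation --- but the extra second time derivative of $\int\phi|u|^{p+1}$ is unaccounted for. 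Equivalently: applying the linear identity with $\tilde V$ yields a formula for $\ddot{\tilde\Theta}_W$ with $\tilde\Theta_W=\Theta_W-\int\phi|u|^{p+1}\neq\Theta_W$. A direct check with $A=V=0$, $\phi=|x|^2$, $\Psi=1$ against the corollary confirms the discrepancy.

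To be fair, the paper's one-line proof passes over the same point in silence, so you have reproduced its argument faithfully; but a complete argument must either add $\frac{2}{p+1}\frac{d^2}{dt^2}\int\phi|u|^{p+1}$ to the right-hand side of \eqref{eq:virial3}, or --- more naturally --- redefine $\Theta_W$ by using the full nonlinear energy density, i.e.\ subtracting $\frac{2}{p+1}\int\phi|u|^{p+1}$, after which $\dot\Theta_W=M(t)$ holds again and your substitution argument goes through verbatim. Your regularity remarks (working first with $H^2_A$ data and passing to the limit, in the spirit of Remark \ref{rm:regularity}) are fine. Note finally that the issue is quarantined: the paper proves Theorem \ref{thm:Wblowup} by Levine's concavity method and never invokes \eqref{eq:virial3}.
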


\begin{proof}
We can argue as in the case of the Schr\"odinger equation just considering the analogous to Remark \ref{rm:regularity}. Recall the expression for the second derivative of $\Theta_W$ for the linear version of \eqref{eq:nlwmaggen} that appears in \cite{FV}. It reads as follows.
\begin{align}
\ddot{\Theta}_{W}(t)=&2\int_{\R^{n}}\nabla_{A}uD^{2}\phi\overline{\nabla_{A}u}\,dx-\frac{1}{2}\int_{\R^{n}}|u|^{2}\Delta^{2}\phi\,dx\\
&+2\int_{\R^{n}}|u_{t}|^{2}\Psi\,dx-2\int_{\R^{n}}|\nabla_{A}u|^{2}\Psi\,dx+\int_{\R^{n}}|u|^{2}\Delta\Psi\,dx
\nonumber
\\
&-\int_{\R^{n}}\phi' V_{r}|u|^{2}\,dx-2\int_{\R^{n}}\Psi V|u|^{2}\,dx+2\Im\int_{R^{n}}u\phi' B_{\tau}\cdot\overline{\nabla_{A}u}\,dx.
\nonumber
\end{align}
Now, by considering the potential
\begin{equation}
\tilde{V}=V-|u|^{p-1},
\end{equation}
and proceeding as in the Schr\"odinger case for the nonlinear part, the result follows easily.

\end{proof}

\begin{remark}Notice that $B_{\tau}$ only involves the terms with $\phi$. This will be useful in Section \ref{sec:blowup}.
\end{remark}
We give two corollaries of the previous theorems.
\begin{corollary}Let u be a solution of the magnetic nonlinear Schr\"odinger equation \eqref{eq:nlsmag}  with $f\in L^{2}$, $H_{A}f\in L^{2}$. Then the variance
\begin{equation*}
	Q(t)=\int_{\R^{n}}|x|^{2}|u|^{2}\,dx
\end{equation*}
satisfies the identities
\begin{equation}
	\dot{Q}(t)=4\Im\int_{\R^{n}}\bar{u}(t,x)\nabla_{A}u(t,x)\cdot x\,dx.
\end{equation}
\begin{align}\label{eq:virialmul}
	\ddot{Q}(t)=&8\int_{\R^{n}}|\nabla_{A}u|^{2}\,dx
	-4\int_{\R^{n}}|x|V_{r}|u|^{2}\,dx\\
	&+8\Im\int_{\R^{n}}|x|uB_{\tau}\cdot\overline{\nabla_{A}u}\,dx-4\frac{n(p-1)}{(p+1)}\int_{\R^{n}}|u|^{p+1}\,dx.
	\nonumber
\end{align}
\end{corollary}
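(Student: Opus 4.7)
The plan is to deduce this corollary directly from Theorem \ref{thm:virial} by choosing the radial multiplier $\phi(x)=|x|^2$. With this choice, $\Theta_S(t)=Q(t)$, so both identities will fall out of \eqref{eq:virial1} and \eqref{eq:virial2} once the relevant derivatives of $\phi$ are computed.

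First I would record the basic identities for $\phi(x)=|x|^2$: $\nabla\phi=2x$, $D^2\phi=2I_n$, $\Delta\phi=2n$, $\Delta^2\phi=0$, and viewing $\phi$ as a function of $r=|x|$, $\phi'(r)=2r$. Substituting $\nabla\phi=2x$ into \eqref{eq:virial1} immediately yields $\dot{Q}(t)=4\Im\int\bar u\,\nabla_A u\cdot x\,dx$. For the second identity, I plug each of these derivatives into \eqref{eq:virial2} term by term: the Hessian term produces $4\int\nabla_A u\cdot(2I)\overline{\nabla_A u}\,dx=8\int|\nabla_A u|^2\,dx$; the bi-Laplacian term vanishes; the potential term becomes $-2\int(2|x|)V_r|u|^2\,dx=-4\int|x|V_r|u|^2\,dx$; the magnetic term becomes $4\Im\int u(2|x|)B_\tau\cdot\overline{\nabla_A u}\,dx=8\Im\int|x|uB_\tau\cdot\overline{\nabla_A u}\,dx$; and the nonlinear term becomes $-2\frac{p-1}{p+1}\int|u|^{p+1}(2n)\,dx=-4\frac{n(p-1)}{p+1}\int|u|^{p+1}\,dx$. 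Assembling these yields \eqref{eq:virialmul}.

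The one point that genuinely requires care, and which I expect to be the main (though mild) obstacle, is that $\phi(x)=|x|^2$ is unbounded, so a priori it is not a legal test multiplier in the distributional identities that underlie Theorem \ref{thm:virial}. The standard remedy is an approximation argument: choose a smooth radial cutoff $\chi_R(x)$ equal to $1$ for $|x|\le R$ and supported in $|x|\le 2R$, apply the theorem with $\phi_R(x)=|x|^2\chi_R(x)$, and then let $R\to\infty$. The hypothesis $f\in L^2$ with $H_A f\in L^2$, together with mass and energy conservation, keeps $u(t)\in H^1_A$; when combined with the weight condition $|x|f\in L^2$ (so that $Q(0)$ is finite and the identities propagate), the extra $R$-dependent boundary contributions produced by the derivatives of $\chi_R$ vanish in the limit by dominated convergence. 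This reduction is standard and parallel to the one used in \cite{FV} for the linear case, so I would only sketch it.

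Finally, the formal computations require $\nabla_A u,\,H_A u\in L^2$ and the nonlinear manipulations rely on Sobolev embedding for $u\in H^1_A$, both of which hold by the same reasoning already invoked in Remark \ref{rm:regularity} and in the proof of Theorem \ref{thm:virial}. Hence no new regularity input is needed and the corollary follows.
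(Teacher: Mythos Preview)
Your proposal is correct and follows exactly the paper's own approach: the paper states that the corollary is an ``immediate application'' of identity \eqref{eq:virial2} with the choice $\phi=|x|^{2}$, which is precisely what you do. Your additional discussion of the cutoff approximation for the unbounded multiplier is more careful than the paper's one-line justification, but the underlying method is the same.
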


\begin{corollary}Let u be a solution of the magnetic nonlinear wave equation \eqref{eq:nlwmaggen} with $f, g\in L^{2}$, $H_{A}f, H_{A}g\in L^{2}$. Then the quantity
\begin{equation*}
Q(t)=\int_{\R^{n}}\left\{|x|^{2}\left(|u_{t}|^{2}+|\nabla_{A}u|^{2}+|u|^{2}V\right)-(n-1)|u|^{2}\right\}\,dx
\end{equation*}
satisfies the identity
\begin{align}
\ddot{Q}(t)=&2\int_{\R^{n}}|u_{t}|^{2}+|\nabla_{A}u|^{2}\,dx-2\int_{\R^{n}}|x|V_{r}|u|^{2}\,dx-2\int_{\R^{n}}V|u|^{2}\,dx
\nonumber
\\
&+4\Im\int_{\R^{n}}|x|uB_{\tau}\cdot\overline{\nabla_{A}u}\,dx+2\left(1-n\frac{p-1}{p+1}\right)\int_{\R^{n}}|u|^{p+1}\,dx.
\end{align}
\end{corollary}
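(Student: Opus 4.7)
The plan is to invoke Theorem \ref{thm:virialwave} with the specific multipliers $\phi(x) = |x|^{2}$ and $\Psi(x) \equiv 1$, both of which are radial and real-valued. A direct substitution of these choices into the definition of $\Theta_{W}$ and into the right-hand side of \eqref{eq:virial3} produces the claimed identity.

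First I would match the functionals. With $\phi = |x|^{2}$ one has $\Delta\phi = 2n$, so $-\tfrac{1}{2}(\Delta\phi)|u|^{2} = -n|u|^{2}$; adding the $\Psi|u|^{2} = |u|^{2}$ contribution gives exactly the coefficient $-(n-1)|u|^{2}$, while the remaining pieces $\phi|u_{t}|^{2}$, $\phi|\nabla_{A}u|^{2}$ and $\phi V|u|^{2}$ assemble into the $|x|^{2}$-weighted integrand. Hence $\Theta_{W}(t)$ coincides with the proposed $Q(t)$.

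Next I would evaluate the right-hand side of \eqref{eq:virial3} term by term, using $D^{2}\phi = 2\,I_{n}$, $\Delta^{2}\phi = 0$, $\phi'(r) = 2r$ (interpreted as $2|x|$) and $\Delta\Psi = 0$. The Hessian term yields $4\int|\nabla_{A}u|^{2}$; the $\Psi$-weighted kinetic terms yield $2\int|u_{t}|^{2} - 2\int|\nabla_{A}u|^{2}$; the $\Delta\Psi$ term vanishes; the electric terms become $-2\int|x|V_{r}|u|^{2}$ and $-2\int V|u|^{2}$; the trapping term gives $4\,\Im\int|x|u B_{\tau}\cdot\overline{\nabla_{A}u}$; and the nonlinear coefficient simplifies to $2\Psi - \tfrac{p-1}{p+1}\Delta\phi = 2 - \tfrac{2n(p-1)}{p+1} = 2\bigl(1 - n\tfrac{p-1}{p+1}\bigr)$. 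Combining $4|\nabla_{A}u|^{2} - 2|\nabla_{A}u|^{2} = 2|\nabla_{A}u|^{2}$ collapses the kinetic contribution into $2\int(|u_{t}|^{2}+|\nabla_{A}u|^{2})$, which is exactly the stated formula.

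I do not anticipate any substantive obstacle; the argument is a formal substitution into a theorem already proven. The only mild point is that $\phi = |x|^{2}$ is unbounded, so for $Q(t)$ and its derivatives to be finite one needs sufficient spatial decay of the data, and each integration by parts needs to be justified either by that decay or by the standard truncation-and-limit procedure in the spirit of Remark \ref{rm:regularity}.
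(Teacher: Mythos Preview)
Your proposal is correct and follows exactly the paper's own approach: the paper states that the corollary is an immediate application of identity \eqref{eq:virial3} with the choice $\phi=|x|^{2}$, $\Psi\equiv1$, and you have simply written out that substitution in detail. The computations you give for $\Theta_{W}=Q$ and for each term of $\ddot{\Theta}_{W}$ are all accurate.
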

The proofs of the corollaries are immediate applications of identities \eqref{eq:virial2} and \eqref{eq:virial3} with the choice $\phi=|x|^{2}$, $\Psi\equiv1$.

Let us now consider the virial identity \eqref{eq:virialmul} particularized to the case of $L^{2}$-critical or supercritical nonlinearity, namely $1+\frac{4}{n}\leq p<1+\frac{4}{n-2}$. It reads as follows,
\begin{align}\label{eq:virialmulcrit}
	\ddot{Q}(t)=&8\int_{\R^{n}}|\nabla_{A}u|^{2}\,dx
	-4\int_{\R^{n}}|x|V_{r}|u|^{2}\,dx\\
	&+8\Im\int_{\R^{n}}|x|uB_{\tau}\cdot\overline{\nabla_{A}u}\,dx-4\frac{n(p-1)}{(p+1)}\int_{\R^{n}}|u|^{p+1}\,dx.
	\nonumber
\end{align}
The last quantity can be expressed in terms of the energy \eqref{eq:energy}. Hence
\begin{align}\label{eq:virialener}
	\ddot{Q}(t)=&16E_{S}(t)-8\int_{\R^{n}}V|u|^{2}\,dx\\
	&-4\int_{\R^{n}}|x|V_{r}|u|^{2}\,dx
	+8\Im\int_{\R^{n}}|x|uB_{\tau}\cdot\overline{\nabla_{A}u}\,dx
	\nonumber
	\\
	&+\frac{16-4n(p-1)}{p+1}\int_{\R^{n}}|u|^{p+1}\,dx,
	\nonumber
\end{align}
and since the energy is conserved, $E_{S}(t)=E_{S}(0)$ for all times $t\in(-T_*,T^*)$, it holds
\begin{align}\label{eq:virialenerini}
	\ddot{Q}(t)=&16E_{S}(0)-8\int_{\R^{n}}V|u|^{2}\,dx\\
	&-4\int_{\R^{n}}|x|V_{r}|u|^{2}\,dx
	+8\Im\int_{\R^{n}}|x|uB_{\tau}\cdot\overline{\nabla_{A}u}\,dx
	\nonumber
	\\
	&+\frac{16-4n(p-1)}{p+1}\int_{\R^{n}}|u|^{p+1}\,dx.
	\nonumber
\end{align}
Now, since we have $1+\frac{4}{n}\leq p<1+\frac{4}{n-2}$ the last term in \eqref{eq:virialenerini} is clearly negative. Therefore, it can be expected that, given $f\in\Sigma$ such that the corresponding energy $E_{S}(0)<0$, then the solution $u(t)$ of \eqref{eq:nlsmag} blows up in finite time. 

\section{Proofs of theorems \ref{thm:Sblowup} and \ref{thm:Wblowup}}\label{sec:blowup}
In this section we give the proofs of the Theorems \ref{thm:Sblowup} and \ref{thm:Wblowup}. We will prove that, under some assumptions for magnetic potential $A$ and the electric potential $V$, the local solution of the Cauchy problems \eqref{eq:nlsmag} and \eqref{eq:nlwmaggen} with initial data of negative energy actually blows up in finite time. For the $3$-D Schr\"odinger equation, a blow up result  is given by Gon\c calves-Ribeiro in \cite{GR}. In some sense, our result includes a natural generalization to higher dimensions. During the proof we will use the virial identity given by \eqref{eq:virialmul}, with mass-critical and supercritical nonlinearity, written in terms of the energy, namely \eqref{eq:virialener}.

\begin{proof}[Proof of Theorem \ref{thm:Sblowup}]We will argue by contradiction showing that there exists a finite time $T^+$ satisfying $Q(T^+)<0$. This contradicts the fact that $Q$ is nonnegative for all times. Recall that if we start with an initial datum $f\in\Sigma$ the solution $u$ is a $\Sigma$-valued continuous function. The proof can be found for the case $n=3$ in \cite{GR} (see the third step of the proof of the Theorem $1.2$). Let $f\in\Sigma$ such that $E_{S}(0)<0$. Let us assume $(i)$.

\noindent Now, recalling \eqref{eq:virialener}, we get that 
\begin{align}\label{eq:virialtype1}
		\ddot{Q}(t)=&16E_{S}(t)-8\int_{\R^{n}}V|u|^{2}\,dx\\
	&-4\int_{\R^{n}}|x|V_{r}|u|^{2}\,dx
	+8\Im\int_{\R^{n}}|x|uB_{\tau}\cdot\overline{\nabla_{A}u}\,dx
	\nonumber
	\\
	&+\frac{16-4n(p-1)}{p+1}\int_{\R^{n}}|u|^{p+1}\,dx.
	\nonumber
\end{align}
Let us now expand the first term of the previous equality. We have that
\begin{align}\label{eq:expand1}
\int_{\R^{n}}|\nabla_{A}u|^{2}\,dx=&\int_{\R^{n}}|\nabla u|^{2}\,dx+\int_{\R^{n}}|A|^{2}|u|^{2}\,dx-2\Re\int_{\R^{n}}\nabla\bar{u}\cdot iAu\,dx\\
\nonumber
=&\int_{\R^{n}}|\nabla u|^{2}\,dx+\int_{\R^{n}}|A|^{2}|u|^{2}\,dx+2\Im\int_{\R^{n}}\nabla\bar{u}\cdot Au\,dx.
\end{align}
Notice that for the the magnetic potentials $A$ given \eqref{eq:AA}, it holds
\begin{equation}
-2A(x)=|x|B_{\tau}(x).
\end{equation}
Therefore, we obtain
\begin{equation}\label{eq:expand2}
\Im\int_{\R^{n}}|x|uB_{\tau}\cdot\overline{\nabla_{A}u}\,dx=-2\Im\int_{\R^{n}}\nabla\bar{u}\cdot Au\,dx-2\int_{\R^{n}}|A|^{2}|u|^{2}\,dx.
\end{equation}
Hence, from \eqref{eq:virialtype1} particularized to the case $1+\frac{4}{n}\leq p<1+\frac{4}{n-2}$, \eqref{eq:expand1} and \eqref{eq:expand2}, we get
\begin{align}\label{eq:conii}
\ddot{Q}(t)\leq&8\int_{\R^{n}}|\nabla u|^{2}\,dx-8\int_{\R^{n}}|A|^{2}|u|^{2}\,dx
\nonumber
\\
&+8\int_{\R^{n}}V|u|^{2}\,dx-\frac{16}{(p+1)}\int_{\R^{n}}|u|^{p+1}\,dx\leq CE_{S}(0),
\end{align}
for some positive constant $C$.

If we consider that $A$ is such that $B_\tau=0$, we have that trivially holds
\begin{equation}\label{eq:coniii}
	\ddot{Q}(t)\leq16E_{S}(0).
\end{equation}
Integrating relations \eqref{eq:conii} or \eqref{eq:coniii} with respect to time twice, we obtain,
\begin{equation}
	Q(t)\leq CE_{S}(0)t^{2}+4t\Im\int_{\R^{n}}\bar{u}(0,x)\nabla_{A}u(0,x)\cdot x\,dx+Q(0),
\end{equation}
for some positive constant $C$. Now, since $Q(0)$ is finite and $E_{S}(0)<0$, there would be a finite time $T^{+}$ such that $Q(T^{+})<0$. This however is in contradiction with the fact that, by definition $Q(t)\geq0$ for all times. This proves that the solution stops to exist in a finite time.

\end{proof}

Now, we will proof blow up for the solution of the focusing energy-subcritical nonlinear magnetic wave equation \eqref{eq:nlwmaggen}. It will be shown that the magnetic potential $A$ has no influence in the blow up phenomena, that is, it seems that only the electric potential $V$ plays a role.

\begin{proof}[Proof of Theorem \ref{thm:Wblowup}]We will proceed following the proof performed by Levine for the free nonlinear wave equation (see \cite{Lev}). Similar arguments have been performed in \cite{Ball}, \cite{Glass1}, \cite{PS} and \cite{Tsu} among others. Let us consider
\begin{equation}
F(t)=\|u\|_{L^{2}}^{2}=\langle u,u\rangle.
\end{equation}
We will show that if $f$ and $g$ are chosen correctly, then $F(t)$ goes to infinity in finite time. Therefore, suppose that we can find $\alpha>0$ and initial data $f$ and $g$ such that
\begin{itemize}
\item[(a)]$(F(t)^{-\alpha})''\leq0$,\quad$\forall t\geq0$,
\item[(b)]$(F(t)^{-\alpha})'<0$,\quad$t=0$.
\end{itemize}
Then $F(t)^{-\alpha}$ will go to zero in finite time. This will imply that $F(t)$ will go to infinity in finite time.

\noindent We have that, taking the derivative of $F^{-\alpha}$, it holds
\begin{equation}
(F^{-\alpha})'=-\alpha F^{-\alpha-1}F',
\end{equation}
and
\begin{equation}
(F^{-\alpha})''=-\alpha F^{-\alpha-2}[FF''-(\alpha+1)(F')^{2}].
\end{equation}
Hence, whenever $F(t)\neq0$ if we could show that $FF''-(\alpha+1)(F')^{2}\geq0$, it would follow $(a)$, and therefore $F^{-\alpha}$ would be concave. Thus, if $F(0)\neq0$, we will have for all t for which $u(t)$ exists,
\begin{equation}\label{eq:ineqF}
F^{-\alpha}(t)\leq F^{-\alpha}(0)-\alpha tF'(0)F^{-\alpha-1}(0),
\end{equation}
since the graph of a concave function must lie below any tangent line. The condition $(b)$ will follow if $f$ and $g$ have the same sign, since
\begin{equation}
(F^{-\alpha})'(0)=-\alpha F^{-\alpha-1}(0)F'(0)=-2\alpha F(0)^{-\alpha-1}\Re\langle u_{t}(0),u(0)\rangle.
\end{equation}
Hence, \eqref{eq:ineqF} is equivalent to have
\begin{equation}
F^{\alpha}(t)\geq F^{\alpha+1}(0)[F(0)-\alpha tF'(0)]^{-1},
\end{equation}
and therefore as $t\to T(\leq F(0)/\alpha F'(0))$, from below (if $F'(0)>0$), we see that $F(t)\to\infty$.
Therefore, we have to show that 
\begin{equation}
FF''-(\alpha+1)(F')^{2}\geq0.
\end{equation}
Let us start by computing the derivatives of $F$. We obtain
\begin{equation}
F'(t)=2\Re\langle u_{t},u\rangle,
\end{equation}
and
\begin{equation}
F''(t)=2\langle u_{t},u_{t}\rangle+2\Re\langle u_{tt},u\rangle.
\end{equation}
Therefore if we denote by $Q(t)=FF''-(\alpha+1)(F')^{2}$, we have
\begin{align}
Q(t)=&\left\{4(\alpha+1)\langle u_{t},u_{t}\rangle+2(\Re\langle u_{tt},u\rangle-(2\alpha+1)\langle u_{t},u_{t}\rangle)\right\}F
\\
&-(\alpha+1)\left(2\Re\langle u_{t},u\rangle\right)^{2}
\nonumber
\\
&=4(\alpha+1)\left\{\left(\int|u_{t}|^{2}\,dx\right)\left(\int|u|^{2}\,dx\right)-\left(\Re\int u_{t}\bar{u}\,dx\right)^{2}\right\}
\nonumber
\\
&+2F\left\{\Re\int u_{tt}\bar{u}\,dx-(2\alpha+1)\int|u_{t}|^{2}\,dx\right\}.
\nonumber
\end{align}
The first term on the right is positive by Cauchy-Schwarz, so we need only to show that $H(t)\geq0$, where
\begin{equation}\label{eq:H}
H(t)=\Re\int u_{tt}\bar{u}\,dx-(2\alpha+1)\int|u_{t}|^{2}\,dx.
\end{equation}
From \eqref{eq:nlwmaggen}, we have that
\begin{equation}
u_{tt}=\nabla_{A}^{2}u-Vu+|u|^{p-1}u,
\end{equation}
then , by substituting this in \eqref{eq:H} we arrive to
\begin{align}
H(t)=&\Re\int u_{tt}\bar{u}\,dx-(2\alpha+1)\int|u_{t}|^{2}\,dx
\\
=&\Re\left\{\int\nabla_{A}^{2}u\bar{u}\,dx-\int V|u|^{2}\,dx+\int|u|^{p+1}\,dx\right\}-(2\alpha+1)\int|u_{t}|^{2}\,dx
\nonumber
\\
=&-\int|\nabla_{A}u|^{2}\,dx-\int V|u|^{2}\,dx+\int|u|^{p+1}\,dx-(2\alpha+1)\int|u_{t}|^{2}\,dx.
\end{align}
Thus, if we choose $\alpha$ so that
\begin{equation}
2(2\alpha+1)=p+1,
\end{equation}
we have
\begin{align}
H(t)=-(p+1)E_{W}(t)+2\alpha\int\left(|\nabla_{A}u|^{2}+V|u|^{2}\right)\,dx.
\end{align}
Now, by $(i)$ and the conservation of the energy, we have that 
\begin{equation}
H(t)\geq-(p+1)E_{W}(t)=-(p+1)E_{W}(0).
\end{equation}
Now, suppose that $E_{W}(0)<0$. Since $\alpha=\frac{p-1}{4}>0$, then $H(t)\geq0$ and hence $(F(t)^{-\alpha})''\leq0$. Also, we can choose $f$ and $g$ such that 
\begin{equation}
(F^{-\alpha})'(0)=-\alpha F^{-\alpha-1}(0)F'(0)=-2\alpha F(0)^{-\alpha-1}\Re\langle u_{t}(0),u(0)\rangle<0.
\end{equation}
For such initial data, $F(t)$ goes to infinity in finite time.

\end{proof}

\end{document}